\newtheorem{theorem}{Theorem}[section]
\newtheorem{corollary}[theorem]{Corollary}
\newtheorem{lemma}[theorem]{Lemma}
\newtheorem{proposition}[theorem]{Proposition}
\theoremstyle{definition}
\newtheorem{definition}[theorem]{Definition}
\newtheorem{remark}{Remark}
\title[Regularisation errors and variable Hilbert scales]{Errors of
regularisation under range inclusions using variable Hilbert scales}
\author[Markus Hegland and Bernd Hofmann]{}
\subjclass{Primary: 47A52; Secondary: 65J20.}
\keywords{Regularisation, variable Hilbert scales, interpolation inequality.}
\email{markus.hegland@anu.edu.au}
\email{bernd.hofmann@mathematik.tu-chemnitz.de}
\date{May 20, 2010}
\newcommand{\range}{\operatorname{range}}
\newcommand{\spec}{\operatorname{spec}}
\newcommand{\R}{\mathbb{R}}
\newcommand{\I}{\mathcal{I}}
\newtheorem{example}{Example}
\begin{document}
  \maketitle

\centerline{\scshape Markus Hegland }
\medskip
{\footnotesize
 \centerline{Centre for Mathematics and its Applications}
   \centerline{The Australian National University}
   \centerline{ Canberra ACT, 0200, Australia}
} 

\medskip

\centerline{\scshape Bernd Hofmann}
\medskip
{\footnotesize
 \centerline{Department of Mathematics}
   \centerline{Chemnitz University of Technology}
   \centerline{09107 Chemnitz, Germany.}
}

\bigskip

\begin{abstract}
  Based on the variable Hilbert scale interpolation inequality bounds for the
  error of regularisation methods are derived under range inclusions. In this
  context, new formulae for the modulus of continuity of the inverse of bounded
  operators with non-closed range are given. Even if one can show the
  equivalence of this approach to the version used previously in the literature,
  the new formulae and corresponding conditions are simpler than the former
  ones. Several examples from image processing and spectral enhancement
  illustrate how the new error bounds can be applied.
\end{abstract}

  \section{Introduction}

Let $X$ and $Y$ be infinite dimensional separable Hilbert spaces with norms
$\|\cdot\|$ and scalar products $(\cdot,\cdot)$. We study linear inverse
problems in form of ill-posed operator equations
\begin{equation} \label{opeq}
Af = g, \qquad f \in X,\; g \in Y,
\end{equation}
characterised by an injective bounded linear forward operator $A: X \to Y$ for
which the range $\range(A)$ is a non-closed subset of $Y$. Then equation
(\ref{opeq}) is unstable in the sense that the inverse operator $A^{-1}:
\range(A) \subseteq Y \to X$ is unbounded and hence the use of perturbed data
$g^\delta$ instead of the exact right-hand side $g$ with
\begin {equation} \label{noise}
\|g-g^\delta\| \le \delta
\end{equation}
and noise level $\delta>0$ may lead to arbitrarily large errors in the solution
of (\ref{opeq}) even if the noise level is extremely small. As a consequence of
this ill-posedness phenomenon regularisation methods are required for the stable
approximate solution of the inverse problem. Their basic idea consists in
finding approximations to the exact solution $f$ in form of solutions
$f_\alpha=f_\alpha(g^\delta)$ to stable auxiliary problems neighbouring
(\ref{opeq}). Those solutions are obtained by using the noisy data $g^\delta$.
The degree of neighbourhood of the exploited auxiliary problems is controlled by
a regularisation parameter $\alpha>0$. In this context, small $\alpha$ express
closeness to (\ref{opeq}) in combination with a low level of stability, whereas
larger $\alpha$ ensure better stability, however combined with a low level of
approximation. For the success of any regularisation method an appropriate
trade-off between stability and approximation has to be aspired when choosing
the regularisation parameter.

As already outlined and summarised in the monograph \cite{EHN96} by {\sc Engl,
Hanke} and {\sc Neubauer} a successful way for doing regularisation for linear
ill-posed problems in Hilbert spaces including convergence and convergence rates
of constructed methods requires some knowledge on the impact of smoothness on
the regularised solutions. Smoothness should be understood there in a very
generalised sense as both solution smoothness and smoothing properties of the
forward operator. In \cite{EHN96} such smoothness fitting is focused on H\"older
type source conditions yielding H\"older type convergence rates when the
regularisation method has a sufficiently high level of qualification. This
theory is closely connected with associated (classical) Hilbert scales, where we
refer to the seminal paper by {\sc Natterer} \cite{Nat84}. An extension of that
theory to generalised source conditions implying also more general convergence
rates was performed rather independently by two different approaches. The first
approach initiated and established by {\sc Hegland} (see \cite{Heg92,Heg95})
introduced variable Hilbert scales with positive index functions the behaviour
of which is in particular of interest for large arguments covering the spectrum
of an injective unbounded linear operator with bounded inverse. Results in this
approach are based on interpolation inequalities. An alternative second approach
was developed and published by {\sc Math\'e} and {\sc Pereverzev} (see
\cite{MatPer03b,MatPer03a,MatTau06}) and complimented by ideas of {\sc Hofmann}
and other co-workers (see, e.g., \cite{HofMAt07,HofMS08,HofMW09}). This
approach, in principle, also exploits variable Hilbert scales, but the index
functions occurring there are more specific and their behaviour is of interest
just for small positive arguments covering on the spectrum of $A^*A$. The index
functions of the second approach are monotonically increasing and tend to zero
as the positive arguments tend to zero. Here we call them rate functions
abbreviated by over-lined small Greek letters, because they also express the
convergence rate of approximate solutions. One of the main goals of this paper
is to compare both approaches, their results, required conditions and their
natural interplay. Moreover, some more consequences and new convergence rates
results of {\sc Hegland}'s approach shall be formulated and proven in the
sequel.

In our study we use {\sl variable Hilbert scales} and corresponding {\sl
interpolation inequalities} in order to obtain bounds from above for the error
$\|f-f_\alpha\|$ of regularised solutions under conditions imposed on $f$. As is
well-known the convergence of regularised solutions $f_\alpha \to f$ in $X$,
even if $\delta \to +0$ and $\alpha=\alpha(\delta,g^\delta) \to +0$ is chosen in
an appropriate manner, can be arbitrarily slow for solutions $f$ which are in
some sense non-smooth with respect to the forward operator $A$. It is of
essential interest in regularisation theory to obtain convergence rates
\begin{equation} \label{etarates}
\|f-f_\alpha\|=\mathcal{O}(\bar{\eta}(\delta)) \qquad \mbox{as} \qquad \delta
\to +0
\end{equation}
with appropriate  rate functions $\bar{\eta}$.

For a non-closed range of $A$ convergence rates require that \emph{general
source conditions} are satisfied which attain in the standard case the form
\begin{equation} \label{scclassic}
f=\bar{\psi}(A^*A) v
\end{equation}
with source element $v \in X$ and with some rate function $\bar{\psi}$ defined
at least on the interval $(0,\|A\|^2]$ covering the spectrum of $A^*A$. Here,
$\bar{\psi}(A^*A)$ is well-defined as an injective bounded positive self-adjoint
linear operator by spectral calculus (see, e.g., \cite[Section~2.3]{EHN96}). If
the regularisation method has a sufficiently high {\sl qualification} (see,
e.g., \cite[Definition~2.6]{HofMAt07}), then an asymptotically fast decay of
$\bar \psi(t) \to 0$ as $t \to +0$ corresponds with high order convergence rates
(\ref{etarates}) of regularised solutions and vice versa. Note that the tool of
general source conditions with rate functions $\bar \psi$ different from
monomials was early applied to regularisation theory by {\sc Tautenhahn} in
\cite{Taut96}. Later {\sc Hohage} (see~\cite{Hohage1,Hohage2}) studied in detail
the case of logarithmic functions $\bar \psi$ in (\ref{scclassic}) and
corresponding convergence rates.

Using a non-standard source condition
\begin{equation} \label{Gsmooth}
f=G w
\end{equation}
with source element $w \in X$, the priori information about the smoothness of
the solution $f$ can be expressed by an injective bounded self-adjoint positive
definite linear operator $G: X \to X$, where $\range(G)$ is a non-closed subset
of $X$. In most cases the character of the operator $G$ is rather independent of
the forward operator $A$ and hence $G$ need not be a function of $A^*A$.
However, in order to make conclusions to convergence rates of regularised
solutions the assumption $f \in \range(G)$ of (\ref{Gsmooth}) and the assumption
$f \in \range(\bar{\psi}(A^*A))$ of (\ref{scclassic}) have to be
connected anyway. In the framework of {\sc B\"ottcher} et al.~\cite{Boetal06}
conditions for such connections and their interplay are discussed in a
comprehensive manner. In \cite{HofMAt07} they are called \emph{link conditions}.
Besides the simplest link type $G=\bar{\varphi}(A^*A)$, which is typical for
commuting $G$ and $A^*A$, one of the most convincing class of link conditions
represent \emph{range inclusions} introduced with {\sc Yamamoto} in
\cite{HofYam05} to regularisation theory. The favourite form of such inclusion
is
\begin{equation} \label{Grange}
\range(G) \subseteq \range(\bar{\psi}(A^*A))
\end{equation}
with some rate function $\bar \psi$. Evidently, under (\ref{Gsmooth}) a range
inclusion (\ref{Grange}) immediately implies a source condition
(\ref{scclassic}). The higher the rate expressed by $\bar \psi$ is, i.e. the
faster the decay $\bar \psi(t) \to 0$ as $t \to +0$ goes on, the smaller the set
$\range(\bar{\psi}(A^*A))$ becomes. Hence the condition (\ref{Grange}) is a
strong one for higher rates $\bar \psi$ and vice versa.

An alternative link condition is
\begin{equation} \label{Gineq}
\|\bar{\varrho}(G)x\| \le C\, \|Ax\| \quad \mbox{for all} \;\; x \in X
\end{equation}
for some constant $C>0$, where the rate function $\bar \varrho$ acts as a
benchmark for the {\sl degree of ill-posedness} of equation (\ref{opeq}) with
respect to the a priori information (\ref{Gsmooth}). From Proposition 2.1 in
\cite{Boetal06} we know that a range inclusion $\range(G_1) \subseteq
\range(G_2)$ and a condition of the form $\|G_1x\| \le C\|G_2x\|$ for all $x \in
X$ and some $C>0$ are equivalent. Consequently, with Proposition 2.18 in
\cite{EHN96} the condition (\ref{Gineq}) is equivalent to the range inclusion
\begin{equation} \label{rhorange}
\range(\bar \varrho(G)) \subseteq \range(A^*)=\range((A^*A)^{\scriptscriptstyle
1/2})
\end{equation}
taking into account the identity $\|Ax\|=\|(A^*A)^{1/2}x\|$ for all $x \in X$.

As exploited in \cite[\S~4]{Boetal06} one can reduce (\ref{rhorange}) to the
form (\ref{Grange}) with $\bar \psi(t)=\bar \varrho^{-1}(\sqrt{t})$ if $[\bar
\varrho^{-1}(\sqrt{t})]^2$ is an {\sl operator monotone} function
(cf.~\cite{Bhatia97}. An important special case of that implication, namely for
$\varrho(t)=t^{\frac{1}{2\mu}}$ with $0<\mu \le 1/2$, is well-known as
Heinz-Kato inequality (see, e.g., [6, Proposition 8.21] or the corollary of
Theorem~2.3.3 in \cite[p. 45]{Tanabe79}). In that special case, (\ref{rhorange})
yields (\ref{Grange}) with $\bar \psi(t)=t^\mu$ for exponents $0<\mu \le 1/2$.

In the next section we review the definition and some properties of
index functions and variable Hilbert scales. The fundamental interpolation
inequality is given with a short proof together with an application to
a general regularisation method. We then show how the variable Hilbert
scales provide natural source conditions. In the third section bounds
for the modulus of continuity are given in a variable Hilbert scale
setting. An important part of this section compares the new bounds
on the modulus of continuity with some obtained earlier and shows how
the new results have a substantially simpler structure. The fourth
section analyses linear regularisation methods and parameter choices
using the variable Hilbert scale approach. In section 5 we consider
several examples from image processing and spectral enhancement and
the paper finishes with some final remarks.

  \section{Interpolation inequalities and consequences}

  The main tool used here to derive error bounds for regularised solutions is an
  extension of interpolation inequalities to variable Hilbert scales. For
  classical Hilbert scales $\{X_r\}_{r \in \mathbb R}$ -- with real numbers as
  scale index $r$ -- interpolation inequalities are well-established. These
  interpolation inequalities were initially applied to the treatment of linear
  ill-posed problems (\ref{opeq}) by Natterer in~\cite{Nat84} (see also the
  monograph by Engl, Hanke and Neubauer \cite{EHN96}). For variable Hilbert
  scales, new interpolation inequalities have to be formulated. Here the scale
  index $r$ is replaced by a wide class of index functions defined as:

  \begin{definition} \label{def1}
  We call a real function $\theta$ defined on the open interval $(0,\infty)$ an
  \emph{index function} if it is continuous and positive. The \emph{index set}
  denoted by $\I$ is then the set of all such index functions.

  We call an index function $\bar \theta \in \I$ a \emph{rate function}
  if it is monotonically increasing and if it satisfies the limit condition
  $\lim \limits_{t \to +0} \bar \theta(t)=0.$
  \end{definition}

  {\parindent0em Note} that any monotonically increasing continuous function
  $\theta(t)$ defined on a finite interval $(0,t_0]$ satisfying $\lim
  \limits_{t \to +0} \theta(t)=0$ can be extended to a rate function $\bar
  \theta$ such that $\bar \theta(t) = \theta(t)$ for $t\in (0,t_0]$.
  Furthermore, the index functions corresponding to the classical Hilbert scales
  $X_r$ can be seen to be power functions $\theta(\lambda) = \lambda^r$ for real
  $r$. Rate functions are obtained for this case if $r>0$.

  The set of index functions $\I$ includes the positive constant functions and
  all power functions but not the zero function. We denote the pointwise
  operations by $\phi+\psi$, $\phi\psi$ and $\phi/\psi$, respectively. As usual,
  multiplication by a constant $\gamma$ is denoted by $\gamma \phi$. The
  composition is denoted by $\phi\circ \psi$ where $(\phi\circ\psi)(\lambda) =
  \phi(\psi(\lambda ))$. The pointwise maximum of two index functions is $\phi
  \vee \psi$ defined by $(\phi \vee \psi)\, (\lambda) = \max(\phi(\lambda),
  \psi(\lambda))$ and the pointwise minimum is $\phi \wedge \psi$. One verifies
  that the index set $\mathcal{I}$ from Definition~\ref{def1}
  is closed under
  \begin{itemize}
    \item point-wise addition, multiplication and division,
    \item multiplication with positive constants,
    \item pointwise maximum and minimum and
    \item composition.
  \end{itemize}
  If an index function is injective and surjective, the inverse denoted by
  $\phi^{-1}$ is also an index function. Not every index function is invertible,
  however. The reciprocal function of $\phi$ (with values $1/\phi(\lambda)$) is
  denoted by $1/\phi$.

  The variable Hilbert scales are then families of Hilbert spaces indexed by
  $\I$.
  \begin{definition} \label{def2}
    For a given injective self-adjoint positive definite linear operator $T$
    densely defined on a Hilbert space $X$ we define the \emph{variable Hilbert
    scale} $\{X_\theta\}_{\theta \in I}$ as a family of Hilbert spaces
    $X_\theta$ indexed by functions $\theta$ from the set $\I$ of index functions
    in the sense of Definition~\ref{def1}. Every Hilbert space $X_\theta$ with
    $\theta \in I$ is then the closure of the domain of the quadratic form
  \begin{equation}
    \label{theta-norm}
    \|f\|_\theta^2 = (f, \theta(T)\,f)\,.
  \end{equation}
  \end{definition}

  \medskip

  In such a way, variable Hilbert scales were introduced by Hegland
  in~\cite{Heg92} for the special case of $T$ being the inverse of a compact
  operator and in~\cite{Heg95} for more general $T$. The choice of the operator
  $T$ determines the Hilbert scale. In the simplest case where both $T$ and its
  inverse $T^{-1}$ are bounded all the Hilbert spaces $X_\theta$ are isomorphic
  to $X$ because of the continuity of the index functions $\theta$. In this
  paper, in the context of ill-posed problems (\ref{opeq}) we often assume that
  $T$ \emph{is unbounded but has a bounded inverse}, i.e.~the spectrum of $T$ is
  contained in the interval $[\|T^{-1}\|^{-1},\infty)$ and has $+\infty$ as an
  accumulation point. As the function $1/\lambda$ is an index function and the
  set of index functions is closed under composition, the inverse $T^{-1}$
  generates the same Hilbert scale as $T$. It is thus not necessary to consider
  variable Hilbert scales generated by invertible $T$ and bounded $T^{-1}$
  separately. The more general case where both $T$ and the inverse $T^{-1}$ are
  unbounded is only considered for the negative Laplacian $T=-Delta$ and in
  particular $T=-d^2/dt^2$. For the more general case where also $A$ is
  unbounded we refer to the recent paper \cite{HofMW09}. To get a link with
  (\ref{opeq}), a particular $T$ is suggested either by the forward operator
  $A$, by the operator $G$ of condition (\ref{Grange}) or based on a combination
  of both. A common choice is $T=(A^*A)^{-1}$ for injective operators $A$ with a
  non-closed range. It follows that $A^*A=\theta(T)$ if
  $\theta(\lambda)=1/\lambda$. For classes of problems connected with
  deconvolution, however, $T=-d^2/dx^2$ on $L_2(\R)$ is the canonical choice as
  $T$ is the generator of symmetric convolutions. An index function $\theta$
  such that $A^*A=\theta(T)$ is then found using Fourier transforms. More
  generally, for problems where the source conditions relate to smoothness,
  $T=-\Delta$ can be chosen. In such a case $\Delta$ denotes the Laplacian on
  $L_2(\Omega)$ for some domain $\Omega\subseteq \R^d$.

  It was shown in~\cite{Heg95} that there exists a continuous embedding $X_\phi
  \hookrightarrow X_\psi$ if and only if $\phi\leq\gamma\,\psi$ for some
  constant $\gamma>0$. If two different index functions $\theta_1$ and
  $\theta_2$ are identical on the spectrum of $T$ they define the same norms and
  hence the same space $X_{\theta_1}=X_{\theta_2}$. If they differ on the
  spectrum they do define different norms, however, these norms may be
  equivalent and thus the Hilbert spaces $X_{\theta_1}$ and $X_{\theta_1}$ as
  elements of the variable Hilbert scale $\{X_\theta\}_{\theta \in I}$ are
  indistinguishable.

  The most important connection between the norms of different spaces $X_\theta$
  is the \emph{interpolation inequality for variable Hilbert scales}.

  \begin{lemma}[Interpolation inequality] \label{lem1}
   Let $T$ be an unbounded injective self-adjoint positive definite linear
   operator densely defined on the Hilbert space $X$ with bounded inverse
   $T^{-1}: X \to X$. Moreover let $\phi,\psi,\theta$ and $\Psi$ be index
   functions such that $\Psi$ is concave and
    \begin{equation}
      \label{condition1}
      \phi(\lambda) \leq \Psi(\psi(\lambda)), \quad
      \text{for $\;\|T^{-1}\|^{-1} \le \lambda<\infty$}.
    \end{equation}
    Then for any element $0 \not= f \in X_\theta\cap X_{\psi\theta}$ one gets $f
    \in X_{\phi \theta}$ and
    \begin{equation}
      \label{interpolation}
      \frac{\|f\|_{\phi\theta}^2}{\|f\|_\theta^2} \leq
      \Psi\left(\frac{\|f\|_{\psi\theta}^2}{\|f\|_\theta^2}\right).
    \end{equation}
  \end{lemma}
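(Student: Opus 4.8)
The plan is to reduce the operator inequality to a scalar Jensen inequality by representing all three norms through the scalar spectral measure of $f$. By the spectral theorem for the self-adjoint operator $T$, let $\{E_\lambda\}$ be its spectral family and set $d\mu_f(\lambda) = d\|E_\lambda f\|^2$, a finite positive Borel measure carried by $\spec(T) \subseteq [\|T^{-1}\|^{-1}, \infty)$ with total mass $\|f\|^2$. For any index function $\omega$ the functional calculus gives $\|f\|_\omega^2 = (f, \omega(T) f) = \int \omega(\lambda) \, d\mu_f(\lambda)$, so that $\|f\|_\theta^2$, $\|f\|_{\psi\theta}^2$ and $\|f\|_{\phi\theta}^2$ are the integrals of $\theta$, $\psi\theta$ and $\phi\theta$ against $\mu_f$.

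First I would use $f \in X_\theta$ (so that $0 < \|f\|_\theta^2 < \infty$, employing $f \neq 0$ and positivity of $\theta$) to introduce the probability measure
\begin{equation*}
  d\nu(\lambda) = \frac{\theta(\lambda) \, d\mu_f(\lambda)}{\|f\|_\theta^2}.
\end{equation*}
Dividing the norm integrals by $\|f\|_\theta^2$ then turns the two quotients appearing in (\ref{interpolation}) into plain $\nu$-expectations,
\begin{equation*}
  \frac{\|f\|_{\phi\theta}^2}{\|f\|_\theta^2} = \int \phi \, d\nu, \qquad
  \frac{\|f\|_{\psi\theta}^2}{\|f\|_\theta^2} = \int \psi \, d\nu,
\end{equation*}
the latter being finite precisely because $f \in X_{\psi\theta}$.

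Next I would invoke the pointwise bound. Since $\nu$ is carried by $\spec(T) \subseteq [\|T^{-1}\|^{-1}, \infty)$, the hypothesis (\ref{condition1}) holds $\nu$-almost everywhere, whence monotonicity of the integral gives $\int \phi \, d\nu \le \int (\Psi\circ\psi) \, d\nu$. The decisive step is then Jensen's inequality for the concave function $\Psi$, applied to the probability measure $\nu$ and the integrand $\psi$, yielding $\int (\Psi\circ\psi)\,d\nu \le \Psi\!\left(\int \psi \, d\nu\right)$. Combining these with the expectation identities above produces exactly (\ref{interpolation}); moreover the right-hand side is finite ($\Psi$ being an index function evaluated at the finite number $\int \psi \, d\nu$), which simultaneously establishes $\int \phi \, d\nu < \infty$, i.e. $f \in X_{\phi\theta}$.

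The individual steps are short, so the point requiring care is the justification of Jensen's inequality in this setting: one must confirm that $\psi$ takes values where $\Psi$ is concave (immediate, since $\psi > 0$ and $\Psi$ is concave on all of $(0,\infty)$) and that the concave form of Jensen applies to the possibly unbounded but $\nu$-integrable function $\psi$. The finiteness bookkeeping — \emph{deducing} $f \in X_{\phi\theta}$ from the chain of inequalities rather than assuming it — is the only genuine subtlety; everything else follows directly from the spectral representation.
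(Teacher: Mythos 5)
Your proposal is correct and follows essentially the same route as the paper's own proof: both construct the probability measure $d\nu(\lambda)=\|f\|_\theta^{-2}\,\theta(\lambda)\,d(f,E(\lambda)f)$ from the spectral family of $T$, apply Jensen's inequality for the concave $\Psi$ to the $\nu$-integrable function $\psi$, and then use the pointwise bound $\phi\le\Psi\circ\psi$ on the spectrum together with monotonicity of the integral to deduce both $f\in X_{\phi\theta}$ and the inequality \eqref{interpolation}. Your additional care about where Jensen applies and the finiteness bookkeeping is a faithful elaboration of the same argument.
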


  \begin{proof}
    Let in the following the measure $\nu$ be defined by
    $$d\nu(\lambda) = \|f\|_\theta^{-2} \theta(\lambda)\,
      d(f,E(\lambda)f) \quad \text{for $0 \not=f\in X_{\theta}$} $$
     where $E(\lambda)$ is the spectral family or resolution of the
    identity defined by $T$. By definition, because $\theta$ is positive and the
    integral of $d\nu$ equals 1, $\nu$ is a probability measure. Taking into
    account that $f \in X_{\psi\theta}$ and that $\Psi$ is concave we obtain
    from the inverse Jensen inequality that $\int_0^\infty \Psi(\psi(\lambda))
    \, d\nu(\lambda) \leq \Psi\left(\int_0^\infty \psi(\lambda)\,
    d\nu(\lambda)\right)< \infty$. Because integration is monotone and the
    inequality~\eqref{condition1} holds one gets $ \int_0^\infty \phi(\lambda)\,
    d\nu(\lambda)\leq \int_0^\infty \Psi(\psi(\lambda)) \, d\nu(\lambda)<\infty$
    and hence $f \in X_{\phi \theta}$. Summarising the results we arrive at the
    inequality $$ \int_0^\infty \phi(\lambda)\, d\nu(\lambda) \leq
    \Psi\left(\int_0^\infty \psi(\lambda)\, d\nu(\lambda)\right) $$ which
    provides us with the required inequality~\eqref{interpolation} by replacing
    $d\nu(\lambda)$ by its definition.
  \end{proof}

  The concavity of $\Psi$ is the key property which enables us to use Jensen's
  inequality. The Lemma~\ref{lem2} below shows that this property has only to be
  established for large arguments. We can focus on large arguments, if the
  spectrum of $T$ for $T$ under consideration contains only sufficiently large
  values and has $+\infty$ as an accumulation point. We need some auxiliary
  result:

  \begin{lemma}\label{lem:increase}
    If $\theta : [t_0,\infty) \rightarrow (0,\infty)$ is concave for some $t_0>
    0$ then $\theta$ is monotonically increasing. If moreover $\lim \limits_{t
    \to \infty} \theta(t)=\infty$, then $\theta$ is even strictly increasing.
  \end{lemma}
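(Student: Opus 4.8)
The plan is to exploit the basic monotonicity of secant slopes that characterises concave functions, combined with the two sign constraints supplied by the hypotheses: positivity rules out negative slopes, while divergence to $+\infty$ rules out vanishing slopes. Recall that for a concave $\theta$ and any three points $a<b<c$ in $[t_0,\infty)$ one has the chain of inequalities
$$\frac{\theta(b)-\theta(a)}{b-a} \ge \frac{\theta(c)-\theta(a)}{c-a} \ge \frac{\theta(c)-\theta(b)}{c-b},$$
which follows directly from writing $b$ as a convex combination of $a$ and $c$ and applying the definition of concavity. I would take this three-slope inequality as the single structural tool and derive both assertions from it by contradiction.

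For the first assertion, suppose $\theta$ is not monotonically increasing. Then there are points $a<b$ with $\theta(a)>\theta(b)$, so the secant slope $s:=(\theta(b)-\theta(a))/(b-a)$ is strictly negative. Applying the three-slope inequality to $a<b<c$ for an arbitrary $c>b$ gives $(\theta(c)-\theta(b))/(c-b)\le s<0$, whence $\theta(c)\le \theta(b)+s(c-b)$. Letting $c\to\infty$ forces $\theta(c)\to-\infty$, contradicting the hypothesis that $\theta$ takes values in $(0,\infty)$. Hence $\theta$ is monotonically increasing.

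For the second assertion I would assume $\theta$ is not strictly increasing. Since it is already non-decreasing by the first part, there exist $a<b$ with $\theta(a)=\theta(b)$, so the secant slope over $[a,b]$ vanishes. The three-slope inequality then yields $(\theta(c)-\theta(b))/(c-b)\le 0$ for every $c>b$, that is $\theta(c)\le\theta(b)$ for all $c>b$. Thus $\theta$ is bounded above by $\theta(b)$ on $[b,\infty)$, which is incompatible with $\lim_{t\to\infty}\theta(t)=\infty$. Therefore $\theta$ must be strictly increasing.

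I do not expect a serious obstacle here; the only point requiring care is to justify the three-slope inequality from the definition of concavity alone, without invoking differentiability, by expressing the intermediate point $b$ as $\lambda a+(1-\lambda)c$. Once that elementary fact is in place, both monotonicity statements fall out immediately from propagating a single unfavourable secant slope out to infinity.
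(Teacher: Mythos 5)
Your proof is correct and rests on the same idea as the paper's: a single negative (or zero) secant slope, propagated to the right via concavity, forces the function below a line tending to $-\infty$ (or bounds it above), contradicting positivity (or divergence). The only cosmetic differences are that the paper argues by contraposition with an explicit linear interpolant where you argue by contradiction with the three-slope inequality, and that your treatment of the strict-monotonicity claim is more explicit than the paper's one-line appeal to the convexity of the hypograph.
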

  \begin{proof}
    We show the contraposition. Assume that $\theta:[t_0,\infty)\rightarrow
    (0,\infty)$ is not monotonically increasing. Then there exist $t_0<t_1 <
    t_2$ such that $\theta(t_1)>\theta(t_2)$. Let
    $$
      l(t) = \frac{t-t_1}{t_2-t_1}\theta(t_2) + \frac{t_2-t}{t_2-t_1}\theta(t_1)
    $$
    be the linear interpolant of $\theta$ in $[t_1,t_2]$. As the slope of $l(t)$
    is $(\theta(t_2)-\theta(t_1))/(t_2-t_1) < 0$ one has $l(t)\rightarrow
    -\infty$ for $t\rightarrow \infty$. As $\theta(t) \geq 0$ there exists a
    $t_3 > t_2$ such that $\theta(t_3) > l(t_3)$. By rearranging this inequality
    one gets
    $$
      \theta(t_2) < \frac{t_2-t_1}{t_3-t_1}\theta(t_3) +
      \frac{t_3-t_2}{t_3-t_1}\theta(t_1)
    $$
    and so $\theta$ is not concave. The strict monotonicity for $\lim \limits_{t
    \to \infty} \theta(t)=\infty$ follows immediately from the fact that the
    hypograph of a concave function is a convex set.
  \end{proof}

  Now we can replace index functions which are concave for large arguments
  by such which are globally concave in the following way:

  \begin{lemma} \label{lem2}
    Let $\theta(\lambda)$ be an index function which is concave and hence by
    Lemma~\ref{lem:increase} increasing on the interval $0<\lambda_0 \le
    \lambda<\infty$. Then there exists an index function $\Psi(\lambda)$ which
    is concave for all $0<\lambda<\infty$ such that with some
    $\lambda_1>\lambda_0$ one has
    $$
      \Psi(\lambda) = \theta(\lambda), \quad \mbox{for}\;\; \lambda_1 \le
      \lambda < \infty\,,
    $$
    $$
      \Psi(\lambda) = \lambda \Psi(\lambda_1)/\lambda_1, \quad
      \mbox{for}\;\;0<\lambda \le \lambda_1\,.
    $$
     \end{lemma}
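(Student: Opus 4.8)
The plan is to construct $\Psi$ by hand, gluing a straight line through the origin to the given function $\theta$ at a suitable cut-off point $\lambda_1$. The two formulae in the statement are then essentially forced: on $[\lambda_1,\infty)$ I simply set $\Psi=\theta$, and on $(0,\lambda_1]$ I take the linear function that vanishes at the origin and matches $\Psi(\lambda_1)=\theta(\lambda_1)$ at the junction, namely $\Psi(\lambda)=\lambda\,\Psi(\lambda_1)/\lambda_1$. Continuity at $\lambda_1$ holds by construction, so the whole issue is to choose $\lambda_1>\lambda_0$ so that the glued function is globally concave, and to check that it is again an index function in the sense of Definition~\ref{def1}.

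First I would record the easy properties. The linear piece is positive on $(0,\lambda_1]$ because its slope $\Psi(\lambda_1)/\lambda_1=\theta(\lambda_1)/\lambda_1$ is positive, while $\theta$ is positive on $[\lambda_1,\infty)$; together with continuity this shows $\Psi\in\I$. Both constituent pieces are concave on their respective intervals, the linear one trivially and $\theta$ by hypothesis (where $\theta$ is also increasing by Lemma~\ref{lem:increase}). Hence global concavity of $\Psi$ on $(0,\infty)$ reduces to a single condition at the junction: the left-hand slope must not be smaller than the right-hand slope, that is $\theta(\lambda_1)/\lambda_1\ge\theta'_+(\lambda_1)$, where $\theta'_+$ denotes the right-hand derivative, which exists everywhere for a concave function.

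The main obstacle is therefore the existence of an admissible cut-off $\lambda_1>\lambda_0$ satisfying this slope inequality, and I would phrase it geometrically through the intercept of the supporting line. Set $h(\lambda)=\theta(\lambda)-\lambda\,\theta'_+(\lambda)$, the value at the origin of the tangent to the graph of $\theta$ at $\lambda$, and observe that $\theta(\lambda_1)/\lambda_1\ge\theta'_+(\lambda_1)$ is exactly $h(\lambda_1)\ge 0$. The key fact is that $h$ is non-decreasing on $[\lambda_0,\infty)$, since formally $h'(\lambda)=-\lambda\,\theta''(\lambda)\ge 0$ by concavity; the same monotonicity can be read off directly from the fact that the tangent lines of a concave function rotate so that their intercepts increase, which avoids assuming twice differentiability. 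It then remains to verify that $h$ actually reaches the nonnegative range, and here I would use that a concave increasing function grows at most linearly, so that $\lambda\,\theta'_+(\lambda)$ does not outrun $\theta(\lambda)$ for large $\lambda$, choosing $\lambda_1$ correspondingly large. This quantitative comparison between the chord slope $\theta(\lambda)/\lambda$ and the tangent slope $\theta'_+(\lambda)$ is the delicate point of the argument; once an admissible $\lambda_1$ is secured, concavity of $\Psi$ and the two displayed formulae follow immediately.
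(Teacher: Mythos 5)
Your construction is the same one the paper uses: take the line through the origin matching $\theta(\lambda_1)$ on $(0,\lambda_1]$, take $\theta$ itself beyond $\lambda_1$, and reduce global concavity to a slope comparison at the junction. The paper characterises $\lambda_1$ as a point where the minimal line $\alpha_0\lambda$ dominating $\theta$ on $[\lambda_0,\infty)$ touches the graph; your condition $h(\lambda_1)=\theta(\lambda_1)-\lambda_1\theta'_+(\lambda_1)\ge 0$ says the chord from the origin is at least as steep as the tangent there, which is the same requirement in different clothing. Up to that point the two arguments are interchangeable.

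The genuine gap is exactly the step you flag as delicate: the existence of an admissible $\lambda_1$. Your justification --- that a concave increasing function grows at most linearly, so $\lambda\,\theta'_+(\lambda)$ cannot outrun $\theta(\lambda)$ for large $\lambda$ --- is false under the stated hypotheses. Take $\theta(\lambda)=\lambda-c$ on $[\lambda_0,\infty)$ with $0<c<\lambda_0$, extended to a positive continuous function on $(0,\lambda_0)$: this is an index function, concave and increasing where required, yet $h(\lambda)\equiv -c<0$ for every $\lambda$, and no junction point can work since $\theta(\lambda_1)/\lambda_1=1-c/\lambda_1<1=\theta'_+(\lambda_1)$ always. (To be fair, the paper's own proof stumbles at the same place: for this $\theta$ the infimal dominating slope is $\alpha_0=1$, the line $\lambda\mapsto\lambda$ never touches the graph, and the assertion that non-touching would contradict $\alpha_0$ being the greatest lower bound is simply wrong.) The step can only be rescued by importing an extra hypothesis such as the sublinearity condition (\ref{limquo}), i.e.\ $\theta(\lambda)/\lambda\to 0$, which does hold in all the applications of the lemma. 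Under that assumption your argument closes cleanly and without any differentiability worries: concavity gives $\theta(\lambda)\le\theta(\mu)+\theta'_+(\mu)(\lambda-\mu)$ for $\lambda_0\le\lambda<\mu$, so $h(\mu)<0$ forces $\theta(\lambda)/\lambda<\theta(\mu)/\mu$; hence if $h<0$ everywhere the positive function $\theta(\lambda)/\lambda$ would be strictly increasing on $[\lambda_0,\infty)$, contradicting its convergence to zero. You should therefore either add (\ref{limquo}) as a hypothesis or restrict to rate-type $\theta$; as written, the existence claim does not follow from concavity, positivity and monotonicity alone.
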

  \begin{proof}
  To obtain the assertion of this lemma we consider the set of real numbers
  $\{\alpha \mid \alpha \lambda \geq \theta(\lambda), \lambda_0 \le
  \lambda< \infty \}$. As $\theta(\lambda)$ is concave for $\lambda_0 \le
  \lambda< \infty$ this set is not empty and it is bounded below by zero. Thus
  it does have a greatest lower bound $\alpha_0 \geq 0$ such that
    \begin{itemize}
      \item $\alpha_0 \lambda \geq \theta(\lambda)$ for $\lambda_0 \le \lambda<
            \infty$,
      \item there is a $\lambda_1$ such that
            $\alpha_0\lambda_1=\theta(\lambda_1)$ if not, $\alpha_0$ would not
            be the greatest lower bound.
    \end{itemize}
    Hence, knowing from Lemma~\ref{lem:increase} that an index function $\Psi$
    which is concave for all $0<\lambda<\infty$ is always increasing, the
    function $\Psi(\lambda)$ can be composed of a linear function growing from
    zero to $\theta(\lambda_1)$ in the interval $(0,\lambda_1]$ and coinciding
    with $\theta$ for greater arguments.
  \end{proof}

  The interpolation inequality is the main tool to obtain error bounds for
  solvers of linear ill-posed problems. However, taking into account
  Lemma~\ref{lem2} by inspection it becomes clear that rate results derived from
  Lemma~\ref{lem1} are only based on the behaviour of $\Psi(\lambda)$ for large
  $\lambda \ge \lambda_1$.
  Without loss of generality $\Psi$ can be amended for $0<\lambda \le \lambda_1$
  by the linear function $\Psi(\lambda) = \Psi(\lambda_1)\lambda/\lambda_1$ for
  $0<\lambda < \lambda_1$.

   Three typical choices for $\Psi(\lambda)$ being concave at least for
   sufficiently large $\lambda$ are
  \begin{itemize}
    \item $\Psi(\lambda) = \lambda^\kappa$ where  $\kappa\in(0,1)$
    \item $\Psi(\lambda) = \lambda/\log(\lambda)$
    \item $\Psi(\lambda) = \log(\lambda)$.
  \end{itemize}
  For all three choices we have the limit condition
  \begin{equation} \label{limquo}
  \lim \limits _{\lambda \to \infty} \frac{\Psi(\lambda)}{\lambda} \,=\,0
  \end{equation}
 and one gets the following versions of
  interpolation inequalities from Lemma~\ref{lem1}:
  \begin{itemize}
    \item For $\Psi(\lambda)=\lambda^\kappa$ one gets
      $$
        \|f\|_{\phi\theta} \leq \|f\|_\theta^{1-\kappa}\,
        \|f\|_{\psi\theta}^{\kappa}\,,
      $$
    \item for $\Psi(\lambda) = \lambda/\log(\lambda)$ one gets
      $$
        \|f\|_{\phi\theta} \leq \frac{\|f\|_{\psi\theta}}%
        {\sqrt{2\log(\|f\|_{\psi\theta}/\|f\|_\theta)}}\,,
      $$
    \item and for $\Psi(\lambda) = \log(\lambda)$ one has
      $$
        \|f\|_{\phi\theta} \leq \|f\|_\theta
        \sqrt{2\log(\|f\|_{\psi\theta}/\|f\|_\theta}.
      $$
  \end{itemize}

  Asymptotically, i.e.~for $\|f\|_\theta\rightarrow 0$, the interpolation
  inequality allows us to find error bounds in the application to the error
  estimation for the solution of equation (\ref{opeq}). One aims to get bounds
  for the norm $\|f\|$ in $X$ using values of the image norm $\|Af\|$ in $Y$ and
  values of the norm $\|f\|_{\psi\theta}$ which expresses the specific
  additional smoothness of $f$. The terms in the interpolation
  inequality~\eqref{interpolation} are then
  $$
    \|f\|_{\phi\theta} = \|f\| \;\; \mbox{for} \;\; f\in X_{\phi\theta} \quad
    \mbox{and} \quad \|f\|_\theta = \|Af\| \;\; \mbox{for} \;\; f\in X_\theta.
  $$
  The first condition leads to $\phi(\lambda)\theta(\lambda)=1$ for all
  $\lambda$ and the second condition gives $\theta(T)= A^*A$ and with
  $\theta(\lambda):=1/\lambda$ the relations $T = (A^*A)^{-1}$ and
  $\phi(\lambda) = \lambda$. We are still free to choose the index functions
  $\psi$ and do it in the form $\psi(\lambda):= \chi(\lambda)\,\lambda$ with an
  appropriate index function $\chi$.

 For later use we add here some observations about convex functions which are
 stated as a lemma:
  \begin{lemma} \label{concave1}
    Let $\Psi :(0,\infty) \rightarrow (0,\infty)$ be a concave function. Then we
    have the following properties:
    \begin{itemize}
    \item[(a)] The function $\Xi :(0,\infty) \rightarrow (0,\infty)$ defined by
    \begin{equation} \label{concaveprop1}
      \Xi(\lambda) := \frac{\Psi(\lambda)}{\lambda},\qquad 0<\lambda<\infty
    \end{equation}
    is monotonically decreasing.
    \item[(b)] The function $\Phi :(0,\infty) \rightarrow (0,\infty)$ defined by
        \begin{equation} \label{concaveprop2}
      \Phi(\mu) :=  \mu \, \Psi \left(\frac{1}{\mu}\right),\qquad 0<\mu<\infty
    \end{equation}
    is concave and hence monotonically increasing.
    \end{itemize}
  \end{lemma}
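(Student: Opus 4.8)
The plan is to derive both statements directly from the defining inequality of concavity, since each is a standard structural consequence that needs no differentiability. For part (a) I would fix $0<\lambda_1<\lambda_2$ and exhibit $\lambda_1$ as a convex combination of the larger point $\lambda_2$ and a small auxiliary point $\epsilon$ with $0<\epsilon<\lambda_1$, namely
$$
  \lambda_1 = \frac{\lambda_1-\epsilon}{\lambda_2-\epsilon}\,\lambda_2
            + \frac{\lambda_2-\lambda_1}{\lambda_2-\epsilon}\,\epsilon,
$$
where the two coefficients are nonnegative and sum to one. Concavity of $\Psi$ then gives $\Psi(\lambda_1)\ge \frac{\lambda_1-\epsilon}{\lambda_2-\epsilon}\Psi(\lambda_2)+\frac{\lambda_2-\lambda_1}{\lambda_2-\epsilon}\Psi(\epsilon)$, and since $\Psi(\epsilon)>0$ the last term may be discarded. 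Letting $\epsilon\to+0$ yields $\Psi(\lambda_1)\ge \frac{\lambda_1}{\lambda_2}\Psi(\lambda_2)$, which is exactly $\Xi(\lambda_1)\ge\Xi(\lambda_2)$; hence $\Xi$ is monotonically decreasing.

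For part (b) I would recognise $\Phi(\mu)=\mu\,\Psi(1/\mu)$ as a perspective-type transform and verify concavity from the definition. Given $\mu_1,\mu_2>0$, $t\in[0,1]$ and $\mu=t\mu_1+(1-t)\mu_2$, the key observation is that the reciprocal point is itself a convex combination,
$$
  \frac{1}{\mu}=\frac{t\mu_1}{\mu}\cdot\frac{1}{\mu_1}
               +\frac{(1-t)\mu_2}{\mu}\cdot\frac{1}{\mu_2},
$$
whose weights $t\mu_1/\mu$ and $(1-t)\mu_2/\mu$ are nonnegative and sum to one. Applying concavity of $\Psi$ at this combination and multiplying through by $\mu>0$ collapses the weights and produces $\Phi(\mu)\ge t\Phi(\mu_1)+(1-t)\Phi(\mu_2)$, so $\Phi$ is concave. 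Positivity of $\Phi$ together with Lemma~\ref{lem:increase} then gives that $\Phi$ is monotonically increasing.

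I expect the only delicate point to be the passage to the limit in part (a): the domain is open at the origin and $\Psi$ need not extend continuously (or boundedly) to $0$, so the argument must exploit the sign of $\Psi(\epsilon)$ rather than its limit, discarding the nonnegative term \emph{before} sending $\epsilon\to+0$. Part (b) is essentially a bookkeeping identity once the correct weights are spotted, and no limiting or regularity considerations arise. As a consistency check I would note that the two parts are in fact linked by $\Xi(\lambda)=\Phi(1/\lambda)$, so that the monotone decrease of $\Xi$ in (a) is equivalent to the monotone increase of $\Phi$ in (b); this provides an independent route to (a) once (b) has been established.
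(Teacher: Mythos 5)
Your proof is correct and takes essentially the same route as the paper's: part (a) is the paper's argument verbatim (your auxiliary point $\epsilon$ plays the role of the paper's $\lambda_0$, with the positive term discarded before the limit), and part (b) carries out the same perspective-transform computation, merely parametrised by a general convex combination $t\mu_1+(1-t)\mu_2$ rather than the paper's three ordered points $\mu_0<\mu_1<\mu_2$, followed by the same appeal to Lemma~\ref{lem:increase} for the monotonicity. Nothing further is needed.
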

  \begin{proof}
  (a)\, Let $0<\lambda_0<\lambda_1<\lambda_2$. As $\Psi$ is concave and positive
  one has
    \begin{align*}
      \Psi(\lambda_1) &
      \geq\frac{\lambda_1-\lambda_0}{\lambda_2-\lambda_0}\,\Psi(\lambda_2) +
      \frac{\lambda_2-\lambda_1}{\lambda_2-\lambda_0}\, \Psi(\lambda_0) \\
      & \geq\frac{\lambda_1-\lambda_0}{\lambda_2-\lambda_0}\,\Psi(\lambda_2).
    \end{align*}
    As this holds for arbitrarily small $\lambda_0>0$ on has
    $$
      \Psi(\lambda_1) \geq \frac{\lambda_1}{\lambda_2}\, \Psi(\lambda_2)
    $$
    and consequently $\Xi(\lambda_1) \geq \Xi(\lambda_2)$. This proves assertion
    (a) of the lemma.

    (b) \, Let $0 < \mu_0 < \mu_1 < \mu_2$ and $\lambda_i = 1/\mu_i$. Then one
    has $0 < \lambda_2 < \lambda_1 < \lambda_0$ and by the concavity of $\Psi$
    and some simple algebraic manipulations one gets
     \begin{align*}
       & \frac{\mu_1-\mu_0}{\mu_2-\mu_0} \Phi(\mu_2) +
       \frac{\mu_2-\mu_1}{\mu_2-\mu_0} \Phi(\mu_0) =
       \frac{\frac{1}{\lambda_1}-\frac{1}{\lambda_0}}%
       {\frac{1}{\lambda_2}-\frac{1}{\lambda_0}}
       \frac{\Psi(\lambda_2)}{\lambda_2} +
       \frac{\frac{1}{\lambda_2}-\frac{1}%
       {\lambda_1}}{\frac{1}{\lambda_2}-\frac{1}{\lambda_0}}
       \frac{ \Psi(\lambda_0)}{\lambda_0} \\
       & = \frac{1}{\lambda_1} \left(\frac{\lambda_0-\lambda_1}%
       {\lambda_0-\lambda_2}\Psi(\lambda_2) %
         + \frac{\lambda_1-\lambda_2}{\lambda_0-\lambda_2}
          \Psi(\lambda_0)\right) \\
       & \leq \frac{1}{\lambda_1}\, \Psi(\lambda_1) = \Phi(\mu_1).
     \end{align*}
     It follows that $\Phi$ is concave and hence by Lemma~\ref{lem:increase}
     also increasing. This completes the proof of the lemma.
  \end{proof}

\begin{remark} \label{remconnew}
   {\rm We note here that the transformation $\mathcal{S}: \,\Psi \in
   \mathcal{I} \mapsto \Phi \in \mathcal{I}$ according to formula
   (\ref{concaveprop2}),
   applicable to every index function and \emph{preserving concavity}, is an
   \emph{involution}, that means $\mathcal{S}^{-1}=\mathcal{S}$ and hence
   $\mathcal{S}$ is bijective.
   If the concave index function $\Psi$ satisfies $\lim\limits_{\lambda \to
   \infty} \Psi(\lambda)= \infty$, then by Lemma~\ref{lem:increase} the function
   is even strictly increasing and if, in addition, $\Psi$ is a rate function,
   i.e., it satisfies the additional limit condition $\lim\limits_{\lambda \to
   +0} \Psi(\lambda)=0$ (which is also motivated by Lemma~\ref{lem2}), the
   inverse function $\Psi^{-1}$ is a well-defined and convex index function.
   If, on the other hand, the limit condition (\ref{limquo}) holds, then we have
   $$
     \lim \limits_{\mu \to +0} \Phi(\mu)= \lim \limits_{\mu \to +0} \mu
     \,\Psi(1/\mu)= \lim \limits_{\lambda \to \infty} \Psi(\lambda)/\lambda=0
   $$
   and taking into account Lemma~\ref{concave1} (a) and (b) one sees that
   $\Phi=\mathcal{S}(\Psi)$ is a concave \emph{rate function}. Vice versa we
   have that $\Psi=\mathcal{S}(\Phi)$ satisfies (\ref{limquo}) whenever $\Phi$
   is a rate function.

   By inspection of the proof of Lemma~\ref{concave1} one can also see the
   following facts: If $\Psi(\lambda)$ is only concave for $\lambda \in
   [\lambda_0,\infty)$, then $\Phi(\mu)=[\mathcal{S}(\Psi)](\mu)$ is concave for
   $\mu \in (0,\mu]$ with $\mu_0=1/\lambda_0$. The involution $\mathcal{S}$
   preserves also the convexity of an index function and if the concavity or
   convexity is strict, then the strictness carries over to the transformed
   function.
   }\end{remark}

  Now we are ready to draw conclusions from Lemma~\ref{lem1}. A first, abstract
  version of bounds for errors of regularised solutions is given in the
  following corollary. We will denote by $f_\alpha$ an approximation of a
  solution $f$ to equation (\ref{opeq}) which is computed from an approximate
  right-hand side using a regularisation method and a regularisation parameter
  $\alpha>0$.
  \begin{corollary} \label{cor1}
  Let $A:X\rightarrow Y$ be an injective bounded linear operator with non-closed
  range mapping between the two Hilbert spaces $X$ and $Y$. Furthermore let the
  variable Hilbert scale $\{X_\nu\}_{\nu \in I}$ be generated by $T=(A^*A)^{-1}$
  such that any scale element $X_\nu$ has a norm denoted by $\|\cdot\|_\nu$.
  Moreover let $\chi$ and $\Psi$ be index functions and $\Psi$ be concave such
  that
  \begin{equation} \label{Psiineq}
     \Psi\left(\chi(\lambda)\,\lambda\right) \geq \lambda \quad
     \text{for all $\;\|T^{-1}\|^{-1} \le \lambda< \infty$}\,.
 \end{equation}
  If the solution $f$ to (\ref{opeq}) in addition satisfies the condition $f\in
  X_\chi$ and if $f_\alpha \in X_\chi$ is such that
  \begin{align}
    \|f_\alpha -f \|_\chi &= \zeta>0  \label{stability} \\
    \|Af_\alpha - g \| &= \epsilon>0 \label{consistency}
  \end{align}
  then
  \begin{equation} \label{upeps}
    \|f-f_\alpha\| \leq \epsilon \sqrt{\Psi(\zeta^2/\epsilon^2)}.
  \end{equation}
  \end{corollary}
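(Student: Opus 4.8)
The plan is to apply the interpolation inequality of Lemma~\ref{lem1} to the \emph{difference} element $h := f_\alpha - f$, using exactly the dictionary between abstract scale norms and concrete quantities that was set up in the discussion following Lemma~\ref{lem2}. The key observation is that the three norms appearing in \eqref{interpolation} can be matched one-to-one with $\|Af_\alpha - g\|$, $\|f - f_\alpha\|$ and $\|f_\alpha - f\|_\chi$ once the auxiliary index functions $\theta$, $\phi$, $\psi$ are chosen appropriately.

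Concretely, I would set $\theta(\lambda) := 1/\lambda$, so that $\theta(T) = T^{-1} = A^*A$ and hence $\|h\|_\theta^2 = (h, A^*A\,h) = \|Ah\|^2$. Since $g = Af$ and $A$ is linear, $\|Ah\| = \|Af_\alpha - Af\| = \|Af_\alpha - g\| = \epsilon$. Next I would take $\phi(\lambda) := \lambda$, so that $\phi\theta$ is identically the constant index function $1$ and $\|h\|_{\phi\theta}^2 = (h,h) = \|f - f_\alpha\|^2$, precisely the quantity to be bounded. Finally, following the prescription $\psi(\lambda) := \chi(\lambda)\,\lambda$, one gets $(\psi\theta)(\lambda) = \chi(\lambda)$, whence $X_{\psi\theta} = X_\chi$ and $\|h\|_{\psi\theta} = \|h\|_\chi = \zeta$.

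With these identifications the hypotheses of Lemma~\ref{lem1} are readily checked. The membership $h \in X_\theta \cap X_{\psi\theta}$ holds because $A$ is bounded (so $\|h\|_\theta = \epsilon < \infty$) and because $f, f_\alpha \in X_\chi$ forces $h \in X_\chi = X_{\psi\theta}$; moreover $h \neq 0$ since $\zeta > 0$. The compatibility condition \eqref{condition1}, which under these choices reads $\phi(\lambda) = \lambda \leq \Psi(\chi(\lambda)\lambda) = \Psi(\psi(\lambda))$ on $[\,\|T^{-1}\|^{-1}, \infty)$, is \emph{exactly} the standing assumption \eqref{Psiineq}, and $\Psi$ is concave by hypothesis. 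Lemma~\ref{lem1} then yields
\[
  \frac{\|f - f_\alpha\|^2}{\epsilon^2}
  = \frac{\|h\|_{\phi\theta}^2}{\|h\|_\theta^2}
  \leq \Psi\!\left(\frac{\|h\|_{\psi\theta}^2}{\|h\|_\theta^2}\right)
  = \Psi\!\left(\frac{\zeta^2}{\epsilon^2}\right),
\]
and taking square roots gives \eqref{upeps}.

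I do not expect any genuine analytic obstacle here: the entire content is the correct bookkeeping that translates the abstract norms $\|\cdot\|_\theta$, $\|\cdot\|_{\phi\theta}$ and $\|\cdot\|_{\psi\theta}$ into the operator-theoretic quantities $\epsilon$, $\|f-f_\alpha\|$ and $\zeta$. The only point requiring mild care is confirming that \eqref{Psiineq} is literally the instance of \eqref{condition1} demanded by the choices $\phi(\lambda)=\lambda$ and $\psi(\lambda)=\chi(\lambda)\lambda$, and that the concavity of $\Psi$ assumed in the corollary is precisely what Lemma~\ref{lem1} needs in order to invoke the (inverse) Jensen inequality. Once that is verified, the bound is a one-line consequence of the variable Hilbert scale interpolation inequality.
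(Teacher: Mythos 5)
Your proposal is correct and follows exactly the paper's own argument: the same choices $\theta(\lambda)=1/\lambda$, $\phi(\lambda)=\lambda$, $\psi(\lambda)=\chi(\lambda)\lambda$ are made, Lemma~\ref{lem1} is applied to $h=f_\alpha-f$, and the hypothesis \eqref{Psiineq} is recognised as the instance of \eqref{condition1} needed. Your write-up merely spells out the norm identifications in more detail than the paper does; there is no substantive difference.
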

  \begin{proof}
    By Lemma~\ref{lem1} with $\theta(\lambda)=1/\lambda$,
    $\phi(\lambda)=\lambda$, $\psi(\lambda)=\chi(\lambda)\,\lambda$
    and for a concave index function $\Psi$ satisfying (\ref{Psiineq}) one has
    for all $0 \not=h \in X_\chi$
    \begin{equation} \label{auxquo}
      \|h\|^2 \leq \|Ah\|^2\,
      \Psi\left(\frac{\|h\|_\chi^2}{\|Ah\|^2}\right).
    \end{equation}
 Setting $h:=f_\alpha-f$ this yields the estimate
$$ \|f_\alpha-f\|^2 \leq \|A(f_\alpha-f)\|^2 \,
\Psi\left(\frac{\|f_\alpha-f\|_\chi^2}{\|Af_\alpha-Af\|^2}\right)=\epsilon^2
\Psi((\zeta/\epsilon)^2)$$
    and proves the assertion of the corollary.
      \end{proof}

Results similar to those of Corollary~\ref{cor1} can be found for other choices
of $T$, see for example Corollary~\ref{cor1a} where $T=-d^2/dt^2$. The
Corollary~\ref{cor1} can be interpreted as an instance of an \emph{abstract Lax
theorem}~\cite{KenH09} where the condition~\eqref{stability} is a stability
condition and the bound~\eqref{consistency} relates to consistency.

\medskip

   Note that the error estimate (\ref{upeps}) of Corollary~\ref{cor1} requires
   the essential conditions $f \in X_\chi$ and $f_\alpha \in X_\chi$, i.e.~the
   approximate solutions $f_\alpha$ are constructed such that they obtain the
   same smoothness level with respect to $T$ as the exact solution $f$. A next
   step for drawing conclusions of Lemma~\ref{lem1} will be formulated in
   Corollary~\ref{cor11} by assuming that $f$ belongs to ball
   \begin{equation} \label{eq:ballchi}
   B_\chi(R):=\{h \in X_\chi:\; \|h\|_\chi \le R \}
   \end{equation}
   in $X_\chi$ with positive radius $R=R_1$ and that the approximate solutions
   $f_\alpha$ for all $\alpha>0$ under consideration belong to another such ball
   with radius $R=R_2$. Moreover, we consider for data $g^\delta$ satisfying
   (\ref{noise}) the limit process $\delta \to +0$ in correspondence with
   associated regularized solutions $f_\alpha$, where the regularisation
   parameter $\alpha>0$ is chosen either a priori as $\alpha=\alpha(\delta)$ or
   a posteriori as $\alpha=\alpha(\delta,g^\delta)$.

  \begin{corollary} \label{cor11}
  Under the setting of Corollary~\ref{cor1} let the limit condition
  (\ref{limquo}) be satisfied and let $f \in B_\chi(R_1),\;R_1>0$. Moreover with
  prescribed $\delta_{max}>0$ let $f_\alpha \in B_\chi(R_2),$ 
  $R_2>0,$ for all $\alpha$ attributed to $\delta \in (0,\delta_{max}]$ and
  $g^\delta$ satisfying (\ref{noise}) such that
  \begin{equation} \label{eq:ratexi}
  \|Af_\alpha-g\| \le \bar C\,\bar \xi(\delta), \qquad 0<\delta \le \delta_{max},
  \end{equation}
  for some rate function $\bar \xi$ and some constant $\bar C>0$.
  Then we have
   \begin{equation} \label{eq:specialxi}
   \|f-f_\alpha\|\, \le \bar C \bar
   \xi(\delta)\,\sqrt{\Psi\left(\left[\frac{R_1+R_2}{\bar C \bar
   \xi(\delta)}\right]^2\right)},\qquad 0<\delta \le \delta_{max}\,,
   \end{equation}
   where the upper bound in (\ref{eq:specialxi}) is a rate function, i.e., it
   tends to zero as $\delta \to 0.$
    \end{corollary}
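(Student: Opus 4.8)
The plan is to specialise Corollary~\ref{cor1} by controlling the two quantities $\zeta$ and $\epsilon$ that enter the bound~\eqref{upeps}, and then to verify the monotonicity claim that turns the resulting expression into a rate function. First I would bound the stability quantity $\zeta = \|f_\alpha - f\|_\chi$ from above: since $f \in B_\chi(R_1)$ and $f_\alpha \in B_\chi(R_2)$, the triangle inequality in the norm $\|\cdot\|_\chi$ gives $\zeta \le \|f_\alpha\|_\chi + \|f\|_\chi \le R_1 + R_2$. Next I would handle the consistency quantity $\epsilon = \|Af_\alpha - g\|$, which by hypothesis~\eqref{eq:ratexi} satisfies $\epsilon \le \bar C\,\bar\xi(\delta)$ for $0 < \delta \le \delta_{max}$. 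Substituting these two bounds directly into~\eqref{upeps} yields
\begin{equation*}
  \|f-f_\alpha\| \le \epsilon\,\sqrt{\Psi(\zeta^2/\epsilon^2)}
  \le \bar C\,\bar\xi(\delta)\,\sqrt{\Psi\!\left(\left[\tfrac{R_1+R_2}{\bar C\,\bar\xi(\delta)}\right]^2\right)},
\end{equation*}
which is exactly~\eqref{eq:specialxi}.

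The one subtle point, and the main thing requiring care, is that the two substitutions cannot be made independently: the expression $\epsilon\sqrt{\Psi(\zeta^2/\epsilon^2)}$ is increasing in $\zeta$ but its dependence on $\epsilon$ is not obviously monotone, since $\epsilon$ appears both as the prefactor and inside the argument of $\Psi$. To justify replacing $\epsilon$ by its upper bound $\bar C\bar\xi(\delta)$, I would rewrite $\epsilon\sqrt{\Psi(\zeta^2/\epsilon^2)} = \zeta\sqrt{(\epsilon^2/\zeta^2)\,\Psi(\zeta^2/\epsilon^2)}$ and recognise the inner factor as $\Xi(\mu)$ evaluated at $\mu = \zeta^2/\epsilon^2$, where $\Xi(\mu) = \Psi(\mu)/\mu$. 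By Lemma~\ref{concave1}(a) the function $\Xi$ is monotonically decreasing, so $(\epsilon^2/\zeta^2)\Psi(\zeta^2/\epsilon^2)$ is increasing in $\epsilon$; hence enlarging $\epsilon$ to $\bar C\bar\xi(\delta)$ only enlarges the bound, and likewise enlarging $\zeta$ to $R_1+R_2$ only enlarges it. This is precisely what makes the two substitutions legitimate and is the key step I would be most careful about.

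It remains to verify that the right-hand side of~\eqref{eq:specialxi} is a rate function, i.e.\ that it tends to $0$ as $\delta \to +0$. Writing $u = \bar C\bar\xi(\delta)$, which tends to $+0$ as $\delta \to +0$ because $\bar\xi$ is a rate function, the bound takes the form $u\sqrt{\Psi((R_1+R_2)^2/u^2)}$. I would set $\mu = u^2/(R_1+R_2)^2 \to +0$ and rewrite this as $(R_1+R_2)\sqrt{\mu\,\Psi(1/\mu)} = (R_1+R_2)\sqrt{\Phi(\mu)}$, where $\Phi = \mathcal{S}(\Psi)$ is the involution image from~\eqref{concaveprop2}. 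The limit condition~\eqref{limquo} is exactly the hypothesis invoked here: as noted in Remark~\ref{remconnew}, \eqref{limquo} guarantees $\lim_{\mu \to +0}\Phi(\mu) = \lim_{\lambda \to \infty}\Psi(\lambda)/\lambda = 0$, so $\Phi(\mu) \to 0$ and therefore the whole bound tends to $0$. Monotonicity of the bound in $\delta$ follows from monotonicity of $\bar\xi$ together with the fact that $\Phi$ is increasing (Lemma~\ref{concave1}(b)), completing the verification that~\eqref{eq:specialxi} is indeed a rate function.
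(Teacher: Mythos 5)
Your proposal is correct and follows essentially the same route as the paper's own proof: bound $\zeta$ by $R_1+R_2$ via the triangle inequality, bound $\epsilon$ by $\bar C\bar\xi(\delta)$ via \eqref{eq:ratexi}, justify the substitution in $\epsilon$ by rewriting the bound as $\zeta\sqrt{\Psi(\zeta^2/\epsilon^2)/(\zeta/\epsilon)^2}$ and invoking Lemma~\ref{concave1}(a), and use \eqref{limquo} for the decay as $\delta\to 0$. You correctly identify the monotonicity in $\epsilon$ as the one genuinely subtle point, which is exactly where the paper also concentrates its argument.
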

  \begin{proof}
  Since $\Psi$ is concave by Lemma~\ref{lem:increase} the error norm  $\epsilon\,
   \sqrt{\Psi(\zeta^2/\epsilon^2)}$ obtained from (\ref{upeps}) is
  increasing in $\zeta>0$ for fixed $\epsilon>0$ and
  as a consequence of Lemma~\ref{concave1} (a) this upper bound   $\epsilon\,
   \sqrt{\Psi(\zeta^2/\epsilon^2)}= \zeta\,
   \sqrt{\frac{\Psi(\zeta^2/\epsilon^2)}{(\zeta/\epsilon)^2}}$ is increasing in
   $\epsilon>0$ for fixed $\zeta>0$.
   Moreover, due to (\ref{limquo}) in the limit process $\epsilon \to 0$ for
   fixed $\zeta>0$ implying $\zeta/\epsilon \to \infty$ this bound and hence the
   error norm in $X$ even tends to zero. For the mentioned kinds of monotonicity
   we obtain formula (\ref{eq:specialxi}) by $\epsilon \le \bar C \xi(\delta)$
   and $\zeta =\|f-f_\alpha\|_\chi\le \|f\|_\chi+ \|f_\alpha\|_\chi \le
   R_1+R_2$. The upper bound in (\ref{eq:specialxi}) is a rate function
   declining to zero as $\delta \to 0$ because $\bar \xi$ is a rate function.
      \end{proof}

   \begin{remark} \label{Remark_1}
   {\rm As a special case for the situation of Corollary~\ref{cor11} we can
   consider an a posteriori choice $\alpha_{dis}=\alpha_{dis}(\delta,g^\delta)$
   for the regularisation parameter realised by a 
   \emph{discrepancy principle}
   \begin{equation} \label{Morozov}
      \|Af_{\alpha_{dis}}-g^\delta\|= C_{dis} \delta
   \end{equation}
      with some prescribed $ C_{dis}>0$.
   Then by using the triangle inequality we obtain with~(\ref{noise}) as noise
  model $$ \|Af_{\alpha_{dis}}-g\| \le
  \|Af_{\alpha_{dis}}-g^\delta\|+\|g^\delta-g\| \le (C_{dis}+1)\,\delta=\bar C\,
  \delta\,. $$ Then for such $\alpha=\alpha_{dis}$ under (\ref{limquo}) the
  regularisation method converges strongly in $X$ with the convergence rate
  \begin{equation} \label{specialrate1}
  \|f-f_\alpha\|\,=\,\mathcal{O}\left(\delta\,\sqrt{\Psi(\bar
  K/\delta^2)}\right) \quad \mbox{as} \quad \delta \to 0 \end{equation} for some
  constant $\bar K>0$. Note that beside the assumption $f \in B_\chi(R_1)$ on
  the solution smoothness for that result the strong condition
  $f_{\alpha(\delta,g^\delta)}\in B_\chi(R_2)$ for all $\delta \in
  (0,\delta_{max}]$ and all associated $g^\delta$ satisfying (\ref{noise}) is
  required.
   }\end{remark}

 The convergence rate in (\ref{specialrate1}) depends only on the asymptotic
 behaviour of $\Psi(\lambda)$ as $ \lambda \to \infty$. Thus the alteration of
 $\Psi(\lambda)$ for small $\lambda$ in the sense of Lemma~\ref{lem2} has no
 influence on that rate. For the class of functions
 $\Psi(\lambda)=\lambda^\kappa$ with $0<\kappa<1$ rate functions proportional to
 $\delta^{1-\kappa}$ occur in (\ref{specialrate1}). All those error rates are
 lower than the rate $\delta$ which is typical for well-posed problems. It
 should be mentioned that $\Psi(\lambda)=\lambda$ fails to satisfy the condition
 (\ref{limquo}) and used in Corollary~\ref{cor1} the inequality (\ref{auxquo})
 does not yield a convergence rate.

  To get a feeling for the role of the solution smoothness $f \in X_\chi$ we can
  study consequences of the inequality (\ref{Psiineq}) as a hypothesis of
  Corollary~\ref{cor1} taking into account Lemma~\ref{lem:increase}. One
  consequence of (\ref{Psiineq}) is the limit condition $\lim_{\lambda \to
  \infty} \Psi(\lambda)=\infty$ for the function $\Psi$ which is because of its
  concavity then strictly increasing and invertible with convex
  $\Psi^{-1}(\lambda)$ also tending to infinity as $\lambda \to \infty$. Then
  (\ref{Psiineq}) implies $\chi(\lambda) \ge \frac{\Psi^{-1}(\lambda)}{\lambda}$
  for large $\lambda$. Under that condition (\ref{limquo}) is equivalent to
  $\lim_{\lambda \to \infty} \Psi^{-1}(\lambda)/\lambda= \infty$. Hence, the
  index function $\chi(\lambda)$ tends to infinity for $\lambda \to \infty$
  provided that (\ref{limquo}) holds true.

  When setting $\phi(\lambda):=\lambda,\;\theta(\lambda):=1/\lambda$ and
  $\psi(\lambda):=\Psi^{-1}(\lambda)$ in the interpolation inequality
  (\ref{interpolation}) then the corresponding \emph{regularity condition} $f
  \in X_{\psi\theta}$ is equivalent to a \emph{source condition}
  (\ref{scclassic}) which expresses the specific smoothness of the solution $f$
  with respect to the forward operator $A$ of equation (\ref{opeq}).
  \begin{proposition}
  \label{prop1}
  Let $\Psi(\lambda)$, for $0<\lambda<\infty$, be a concave and strictly
  increasing index function satisfying the limit conditions $\lim
  \limits_{\lambda \to +0} \Psi(\lambda)=0,\;$ $\lim \limits_{\lambda \to
  \infty} \Psi(\lambda)=\infty$ and (\ref{limquo}). Moreover let $T=(A^*A)^{-1}$
  and set $\phi(\lambda):=\lambda,\;\theta(\lambda):=1/\lambda$ as well as
  $\psi(\lambda):=\Psi^{-1}(\lambda)$ for $0<\lambda<\infty$. Then we have $f
  \in X_{\psi\theta}$ if and only if $f$ satisfies a source condition
  (\ref{scclassic}) with the function
  \begin{equation} \label{tildeop}
    \bar{\psi}(t) = \frac{1}{\sqrt{t\Psi^{-1}(1/t)}},\qquad 0<t<\infty,
  \end{equation}
  which is then a rate function.
    \end{proposition}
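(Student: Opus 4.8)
The plan is to reduce the whole statement to a single spectral integral and a reciprocal change of variables $\lambda = 1/t$ relating the spectral families of $T=(A^*A)^{-1}$ and of $B:=A^*A$. First I would verify that $\bar\psi$ in (\ref{tildeop}) is a well-defined rate function. Since $\Psi$ is concave, strictly increasing and satisfies $\lim_{\lambda\to+0}\Psi(\lambda)=0$ and $\lim_{\lambda\to\infty}\Psi(\lambda)=\infty$, its inverse $\Psi^{-1}\colon(0,\infty)\to(0,\infty)$ is a continuous, strictly increasing index function with $\Psi^{-1}(0^+)=0$. Hence $t\mapsto t\,\Psi^{-1}(1/t)$ is positive and continuous, so $\bar\psi$ is an index function. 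Writing $t\,\Psi^{-1}(1/t)=\Psi^{-1}(\lambda)/\lambda$ with $\lambda=1/t$, the equivalence of (\ref{limquo}) with $\lim_{\lambda\to\infty}\Psi^{-1}(\lambda)/\lambda=\infty$ (already noted before the proposition) forces $\bar\psi(t)\to0$ as $t\to+0$. Monotonicity of $\bar\psi$ follows cleanly from Lemma~\ref{concave1}(a): with $\Xi(s)=\Psi(s)/s$ decreasing and $\lambda=\Psi(s)$ one has $\Psi^{-1}(\lambda)/\lambda = 1/\Xi(\Psi^{-1}(\lambda))$, which is increasing in $\lambda$; thus $\bar\psi(t)^{-2}=t\,\Psi^{-1}(1/t)$ is decreasing in $t$ and $\bar\psi$ is increasing, so $\bar\psi$ is indeed a rate function.

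Next I would recall the spectral characterisation of the source condition (\ref{scclassic}). Let $F$ denote the spectral family of $B=A^*A$, whose spectrum lies in $(0,\|A\|^2]$ with $0$ as accumulation point. Then $\bar\psi(B)$ is a bounded, injective, positive self-adjoint operator, and $f\in\range(\bar\psi(B))$ holds if and only if $f$ lies in the domain of $\bar\psi(B)^{-1}$, that is
$$\int_{(0,\|A\|^2]}\frac{1}{\bar\psi(t)^2}\,d\|F(t)f\|^2<\infty,$$
in which case $v=\bar\psi(B)^{-1}f$ is the source element. This is the standard description of ranges of functions of self-adjoint operators by spectral calculus.

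Then I would rewrite the norm $\|f\|_{\psi\theta}$ as a spectral integral over $T$ and transport it to $B$. By (\ref{theta-norm}) with $(\psi\theta)(\lambda)=\Psi^{-1}(\lambda)/\lambda$ one has $\|f\|_{\psi\theta}^2=\int(\psi\theta)(\lambda)\,d(f,E(\lambda)f)$, where $E$ is the spectral family of $T$, supported on $[\|A\|^{-2},\infty)$. Since $T=B^{-1}$, the reciprocal substitution $\lambda=1/t$ carries $E$ into $F$ and maps $[\|A\|^{-2},\infty)$ onto $(0,\|A\|^2]$. Evaluating the integrand at $\lambda=1/t$ gives $(\psi\theta)(1/t)=t\,\Psi^{-1}(1/t)=1/\bar\psi(t)^2$ by (\ref{tildeop}), so the change of variables yields exactly
$$\|f\|_{\psi\theta}^2=\int_{(0,\|A\|^2]}\frac{1}{\bar\psi(t)^2}\,d\|F(t)f\|^2.$$
Comparing this with the criterion of the previous step shows that $f\in X_{\psi\theta}$ if and only if $f\in\range(\bar\psi(A^*A))$, i.e.\ $f$ satisfies (\ref{scclassic}); this is the asserted equivalence.

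The monotonicity and limit bookkeeping for $\bar\psi$ are routine, being immediate from Lemma~\ref{concave1}(a) and (\ref{limquo}). I expect the main obstacle to be the careful justification of the reciprocal change of variables $\lambda=1/t$ relating the spectral families of $T=(A^*A)^{-1}$ and $A^*A$ — in particular matching the domains of integration and ensuring that the substitution transforms the spectral measure $d(f,E(\lambda)f)$ into $d\|F(t)f\|^2$, so that the two spectral integrals coincide identically rather than merely being equivalent.
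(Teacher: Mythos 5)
Your proposal is correct and follows essentially the same route as the paper: the rate-function property of $\bar\psi$ is obtained from Lemma~\ref{concave1}(a) together with (\ref{limquo}) via the reciprocal substitution, and the equivalence is reduced to the identity of the two quadratic forms, which the paper states compactly as the operator identity $[\Psi^{-1}((A^*A)^{-1})](A^*A)=[\bar{\psi}(A^*A)]^{-2}$ while you spell it out as coinciding spectral integrals under $\lambda=1/t$. The extra care you flag about transporting the spectral measure of $T=(A^*A)^{-1}$ to that of $A^*A$ is exactly the content the paper suppresses, so there is no substantive difference.
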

  \begin{proof}
  Under the stated assumptions the function $\bar{\psi}$ is well-defined and a
  rate function. Namely, we can write
  $\frac{1}{\sqrt{t\Psi^{-1}(1/t)}}=\sqrt{\frac{\Psi(u)}{u}}$ when using the
  substitution $u:=\Psi^{-1}(1/t)$. The variable $u>0$ is strictly decreasing
  with respect to $t>0$ such that $u \to \infty$ corresponds with $t \to +0$ and
  vice versa $t \to \infty$ corresponds with $u \to +0$, because $\Psi^{-1}$ is
  also strictly increasing and we have $\lim \limits_{\lambda \to \infty}
  \Psi^{-1}(\lambda)=\infty$ and $\lim \limits_{\lambda \to +0}
  \Psi^{-1}(\lambda)=0$ for the functions $\Psi$ under consideration. Now by
  (\ref{limquo}) we have $\lim \limits_{u \to \infty}\frac{\Psi(u)}{u}=0$ and
  with Lemma~\ref{concave1} (a) the quotient $\frac{\Psi(u)}{u}$ is
  monotonically decreasing in $u>0$. This, however, implies that $\bar \psi(t)$
  is monotonically increasing for $t>0$ with limit condition $\lim \limits_{t
  \to +0} \bar \psi(t)=0$. Hence, $\bar \psi$ is a rate function.

   Moreover, we have
  $$f\in X_{\psi\theta} \quad \Longleftrightarrow \quad
  (f,\Psi^{-1}(T)T^{-1}f)<\infty$$ and $$
    f=\bar{\psi}(A^*A)v, \;\; \mbox{for} \;\; v\in X \quad \Longleftrightarrow
    \quad ([\bar{\psi}(A^*A)]^{-1}f,[\bar{\psi}(A^*A)]^{-1}f) <\infty\,.
 $$
  One has
  equivalence if and only if
  $$
    [\Psi^{-1}((A^*A)^{-1})](A^*A) = [\bar{\psi}(A^*A)]^{-2}
  $$
  and the claim follows. This proves the proposition.
  \end{proof}

\bigskip

    After the millennium {\sc Math\'e and Pereverzev} with coauthors seized,
  reused and extended {\sc Hegland}'s ideas and concepts of variable Hilbert
  scales and corresponding interpolation inequalities from \cite{Heg92,Heg95}
  for linear ill-posed problems and their regularisation
  (cf.~\cite{MatPer03b,MatPer03a}) and combined it
  (cf.~\cite{HofMAt07,MatTau06}) with the concept of {\sl approximate source
  conditions} (cf.~\cite{DHY07,Hof06}). The comprehensive theory developed
  therein considers only rate functions as index functions. Such an approach
  leads in general to different formulae compared with the results based on the
  concept of Lemma~\ref{lem1}, but as we will outline in the sequel clear
  cross-connections and sometimes even equivalences of the assertions obtained
  characterise the two different ways.

  \section{Modulus of continuity of $A^{-1}$}

  The \emph{modulus of continuity} of $A^{-1}$ restricted to the set $AM$ with
  $M \subseteq X$ is
  $$
    \omega(M,\delta) = \sup \{\|x\|:\; x\in M, \; \|Ax\| \leq \delta\}.
  $$
  The impact of the modulus of continuity on error bounds in regularisation has
  recently been discussed in the paper \cite[\S 4]{HofMS08}. It is well-known
  that the worst case error
  $$
    e(\hat f,M,\delta):= \sup \limits _{f \in M} e(\hat f,f,\delta) \qquad
    \mbox{for} \qquad e(\hat f,f,\delta):= \sup \limits _{g^\delta \in
    Y:\,\|Af-g^\delta\|\le \delta} \|\hat f(g^\delta) -f\|
  $$
  of linear and nonlinear reconstruction methods $\hat f:\, g^\delta \in Y
  \mapsto \hat f (g^\delta) \in X$ has an infimum
  $$
    e(M,\delta):= \inf \limits _{\hat f:\;Y \to X} e(\hat f,M,\delta)
  $$
  which satisfies for centrally symmetric and convex sets $M$ the inequalities
  $$
    \omega(M,\delta) \le e(M,\delta) \le \omega(M,2\delta) \le 2
    \omega(M,\delta)\,.
  $$
  Hence the modulus of continuity $\omega(M,\delta)$ serves as benchmark for the
  reconstruction error of $\hat f$ when $f,\hat f \in M$ can be assumed. For
  example, from \cite[Lemma~4.2]{HofMS08} one can find a minimax-expression for
  the modulus of continuity in the case of centrally symmetric and convex {\sl
  source sets} for $M$ of the form
  \begin{equation} \label{Gset}
    M=G [B(R)]:=\{x \in X:\; x=Gv, \;v \in X,\; \|v\| \le R\}\,.
  \end{equation}
  corresponding to condition (\ref{Gsmooth}). This expression gets an explicit
  bound from above for the special case $G=\bar \psi(A^*A)$ and
  $$
    M=\bar \psi(A^*A)[B(R)]:=\{x \in X:\; x= \bar \psi(A^*A)v, \;v \in X,\;
    \|v\| \le R\}
  $$
  associated with the source condition (\ref{scclassic}). Note that the rate
  function $\bar \psi(t)$ is only of interest
  here for arguments $0<t \le \|A\|^2$, but without loss of generality
  (cf.~\cite[Theorem 1 (b)]{HofMW09}) we can extend $\bar \psi$ to be a
  monotonically increasing index function defined on $(0,\infty)$. Then by using
  the strictly increasing auxiliary function
  \begin{equation} \label{aux1}
    \Theta(t):= \sqrt{t}\, \bar \psi(t)\,, \qquad 0<t<\infty
  \end{equation}
  satisfying the limits conditions $\lim \limits_{t \to +0} \Theta(t)=0$ and
  $\lim \limits_{t \to \infty} \Theta(t)=\infty$ one obtains for
  $M=\bar\psi(A^*A)[B(R)]$
  \begin{equation} \label{MPoptirate}
    \omega(M,\delta)\,\le\,R\,\bar \psi
    \left(\Theta^{-1}\left(\frac{\delta}{R}\right) \right), \qquad \delta>0
  \end{equation}
   provided that
  \begin{equation} \label{concavecond1}
    \bar\psi^2((\Theta^2)^{-1}(t)) \quad \mbox{is concave for} \quad 0<t<\infty\,.
  \end{equation}
  This result can be derived from Corollary~3.7 and Theorem~2.1(c) in
  \cite{HofMS08} (see also Theorem~1 in the earlier paper \cite{MatPer03a}).
  A similar assertion was already mentioned in a rudimentary form in a paper by
  Ivanov and Korolyuk in 1969~\cite{IvaK69}.

  The following proposition also yields an upper bound for the modulus of
  continuity based on a variable Hilbert scale interpolation inequality using
  Lemma~\ref{lem1} or Corollary~\ref{cor1}. For the proof we use
  Lemma~\ref{concave1} (b).

  \begin{proposition} \label{prop_simpler}
    Let $\Psi(\lambda)$, for $0<\lambda<\infty$, be a concave and strictly
    increasing index function satisfying the limit conditions $\lim
    \limits_{\lambda \to +0} \Psi(\lambda)=0,\;$ $\lim \limits_{\lambda \to
    \infty} \Psi(\lambda)=\infty$ and (\ref{limquo}), for which an index
    function $\chi$ exists that satisfies
    $$
      \chi(\lambda) \ge \Psi^{-1}(\lambda)/\lambda, \qquad 0<\lambda <\infty.
    $$
    Furthermore let $X_\chi$ be an element of a Hilbert scale generated by
    $T=(A^*A)^{-1}$ where $A$ is injective. Then
  \begin{equation} \label{omega_ball}
     \omega(M,\delta) \leq \delta\,
     \sqrt{\Psi\left(\frac{R^2}{\delta^2}\right)},\qquad \delta>0
   \end{equation}
   for $M=B_{\chi}(R)$.
  \end{proposition}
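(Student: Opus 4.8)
The plan is to deduce the bound directly from the pointwise interpolation inequality (\ref{auxquo}) established inside the proof of Corollary~\ref{cor1}, and then to optimise over the two norm quantities that parametrise the set $M=B_\chi(R)$. First I would check that the hypotheses of Corollary~\ref{cor1}, in particular the concavity condition (\ref{Psiineq}), are met. The assumption $\chi(\lambda)\ge \Psi^{-1}(\lambda)/\lambda$ gives $\chi(\lambda)\,\lambda \ge \Psi^{-1}(\lambda)$, and applying the increasing function $\Psi$ to both sides yields $\Psi(\chi(\lambda)\,\lambda)\ge \lambda$ for all $\lambda$, which is exactly (\ref{Psiineq}). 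Hence for every $0\neq x\in X_\chi$ the inequality (\ref{auxquo}) applies and reads
$$
  \|x\|^2 \le \|Ax\|^2\,\Psi\!\left(\frac{\|x\|_\chi^2}{\|Ax\|^2}\right).
$$

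Next I would fix an arbitrary $x\in M$ with $\|Ax\|\le\delta$ and bound the right-hand side above by $\delta^2\Psi(R^2/\delta^2)$ using two monotonicity steps in the two free parameters $\|x\|_\chi\le R$ and $\|Ax\|\le\delta$. Writing $F(a,b):=a\,\Psi(b/a)$ with $a=\|Ax\|^2$ and $b=\|x\|_\chi^2$, the dependence on $b$ is harmless: since $\Psi$ is increasing, $F$ is increasing in $b$, so replacing $\|x\|_\chi$ by the larger value $R$ only enlarges the bound. The dependence on $a$ is the \emph{delicate point}, and this is where I would invoke Lemma~\ref{concave1}~(b): rewriting
$$
  F(a,b)=b\,\frac{a}{b}\,\Psi\!\left(\frac{1}{a/b}\right)=b\,\Phi\!\left(\frac{a}{b}\right),
  \qquad \Phi(\mu)=\mu\,\Psi(1/\mu),
$$
and using that $\Phi$ is monotonically increasing shows that $F$ is increasing in $a$ for fixed $b$; hence replacing $\|Ax\|$ by the larger value $\delta$ again only enlarges the bound. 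Combining the two steps gives $\|x\|^2\le F(\delta^2,R^2)=\delta^2\Psi(R^2/\delta^2)$.

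Finally I would take the supremum over all admissible $x$. The zero element and, by injectivity of $A$, any $x$ with $\|Ax\|=0$ are handled trivially since then $\|x\|=0$, and the positivity of $\chi$ together with the positive definiteness of $T$ ensures $\|x\|_\chi>0$ whenever $x\neq0$, so the non-degeneracy assumptions $\epsilon,\zeta>0$ needed for (\ref{auxquo}) cause no difficulty. Passing to the supremum over $\{x\in B_\chi(R):\|Ax\|\le\delta\}$ then yields $\omega(M,\delta)\le \delta\sqrt{\Psi(R^2/\delta^2)}$, as claimed.

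The step I expect to require the most care is the monotonicity of $F(a,b)$ in $a$: a naive argument would simply replace $\|Ax\|$ by $\delta$ inside $\Psi$, but that is the wrong direction, because as $\|Ax\|$ grows the prefactor $\|Ax\|^2$ increases while the argument $\|x\|_\chi^2/\|Ax\|^2$ decreases, so the two effects compete. The clean way to resolve this is precisely the change of variables to $\Phi(\mu)=\mu\,\Psi(1/\mu)$ of Lemma~\ref{concave1}~(b), which packages both competing effects into a single increasing function; everything else is routine.
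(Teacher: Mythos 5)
Your proof is correct and follows essentially the same route as the paper: verify \eqref{Psiineq} from the hypothesis on $\chi$, apply \eqref{auxquo}, use monotonicity of $\Psi$ to replace $\|x\|_\chi$ by $R$, and then handle the competing dependence on $\|Ax\|$ via Lemma~\ref{concave1}. The only cosmetic difference is that you invoke part (b) (monotonicity of $\Phi(\mu)=\mu\,\Psi(1/\mu)$) where the paper invokes the equivalent part (a) (monotonicity of $\Psi(\zeta)/\zeta$), which amounts to the substitution $\mu=1/\zeta$.
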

  \begin{proof}
    Under the assumptions stated on $\Psi$ and $\chi$ Corollary~\ref{cor1}
    applies. Then from formula (\ref{auxquo}) we can conclude that $\|h\| \le
    \|Ah\| \sqrt{\Psi\left(\frac{\|h\|^2_\chi}{\|Ah\|^2}\right)}$ for all $0
    \not= h \in X_\chi$. As $\Psi$ is monotonically increasing one then gets
    $\|h\| \le \|Ah\| \sqrt{\Psi\left(\frac{R^2}{\|Ah\|^2}\right)}$. Now by
    Lemma~\ref{concave1} (a) the function $\Xi(\zeta)=\Psi(\zeta)/\zeta$ is
    monotonically increasing and so $\Xi(\zeta_1) \ge \Xi(\zeta_2)$ for
    $0<\zeta_1 \le \zeta_2<\infty$. This gives with
    $\zeta_1:=\frac{R^2}{\delta^2}$ and $\zeta_2:=\frac{R^2}{\|Ah\|^2}$ the
    estimate $\|h\| \le \delta\, \sqrt{\Psi\left(\frac{R^2}{\delta^2}\right)}$
    for all $h \in B_{\chi}(R)$ satisfying the additional condition $\|Ah\| \le
    \delta$. Thus the proposition is proven.
  \end{proof}

  We note that for centrally symmetric and convex sets $M, \;f \in M,$ and
  regularised solutions $f_{\alpha_{\text{dis}}} \in M$ obtained from the
  discrepancy principle of form (\ref{Morozov}) mentioned in
  Remark~\ref{Remark_1} we easily derive along the lines
  of~\cite[Lemma~2.2]{HofMS08} that
  \begin{equation} \label{quasi1}
    \|f-f_{\alpha_{\text{dis}}}\| \le \omega(2M,(C_{\text{dis}}+1)\delta)
  \end{equation}
  with $2M:=\{u \in X:\,u=2v,\;v \in M\}$. In the case $M=B_{\chi}(R)$ with
  $2M=B_\chi(2R)$ the estimate (\ref{quasi1}) yields with (\ref{omega_ball}) a
  convergence rate of the form (\ref{specialrate1}) with constant $\bar
  K=4R^2/(C_{\text{dis}}+1)^2$. With more generality such rates were verified
  above directly from Corollary~\ref{cor1}.

\smallskip

   Under weak additional assumptions (see~\cite[Corollary~3.7]{HofMS08}) there
   is also a constant $\underline{C}>0$ such that $$
   \omega(B_{\chi}(R),\delta)\, \geq \, \underline C\,\delta\,
   \sqrt{\Psi\left(\frac{R^2}{\delta^2}\right)},\qquad \delta>0 \,.$$ Then a
   convergence rate of the form (\ref{specialrate1}) is \emph{order optimal}
   independent of the constant $\bar K>0$ because of
   $\sqrt{\Psi\left(\frac{C\,R^2}{\delta^2}\right)} \le \max\{C,1\}
   \sqrt{\Psi\left(\frac{R^2}{\delta^2}\right)}$ for all $C>0$. On the other
   hand, Corollary~\ref{cor11} yields an error estimate of best order just for
   $\xi(\delta) \sim \delta$, hence the discrepancy principle is order optimal
   in that sense.

\smallskip

  Evidently, under the assumptions of Proposition~\ref{prop1} with the
  additional setting
  \begin{equation} \label{connected}
   \chi(\lambda)\,:=\,\frac{\Psi^{-1}(\lambda)}{\lambda}\,=\,
   \frac{1}{\bar{\psi}(1/\lambda)^2}, \qquad 0<\lambda<\infty
  \end{equation}
  one has
  $$
     \bar\psi(A^*A)[B(R)] = B_\chi(R)
  $$
  where $B_\chi(R)$ denotes the ball (\ref{eq:ballchi}) of radius $R$ in
  $X_\chi$, an element of the Hilbert scale generated by $T=(A^*A)^{-1}$
  expressed through the index function $\chi$. We emphasise that the upper bound
  in (\ref{omega_ball}) for the modulus of continuity from
  Proposition~\ref{prop_simpler} needing only one function $\Psi$ has a
  \emph{much simpler structure} than the nested upper bound in
  (\ref{MPoptirate}) composing the functions $\bar \psi$ and $\Theta^{-1}$. Also
  the required concavity of $\Psi$ for obtaining (\ref{omega_ball}) \emph{looks
  much simpler} than the needed concavity of the composite function
   $$
     \bar\psi^2((\Theta^2)^{-1}(t))\,\equiv\,\bar\psi^2(\Theta^{-1}(\sqrt{t})),
     \qquad 0<t<\infty,
   $$
  for obtaining (\ref{MPoptirate}).

  Owing to the correspondence (\ref{tildeop}) between the concave index function
  $\Psi$ and the rate function $\bar \psi$ it is of some interest to compare the
  quality of the estimates (\ref{MPoptirate}) and (\ref{omega_ball}) as well as
  the strength of conditions which have to imposed in order to ensure those
  bounds for $\omega$.

 \begin{proposition} \label{prop:onetoone}
   Let $\Psi(\lambda)$, for $0<\lambda<\infty$, be a concave and strictly
   increasing index function satisfying the limit conditions $\lim
   \limits_{\lambda \to +0} \Psi(\lambda)=0,\;$ $\lim \limits_{\lambda \to
   \infty} \Psi(\lambda)=\infty$ and (\ref{limquo}). Then for the rate function
   $\bar \psi(t):=1/\sqrt{t\Psi^{-1}(1/t)}$ (cf.~(\ref{tildeop})) and by setting
   $\Theta(t):=\sqrt{t} \bar \psi(t),\;0<t<\infty$, we have the following
   assertions: The error bounds in (\ref{omega_ball}) and in (\ref{MPoptirate})
   and the corresponding concavity conditions required for obtaining those
   bounds coincide, i.e., we have
   \begin{equation} \label{coin}
      \delta\, \sqrt{\Psi\left(\frac{R^2}{\delta^2}\right)} \, =\, R\,\bar \psi
      \left(\Theta^{-1}\left(\frac{\delta}{R}\right) \right)\,, \qquad
      R>0,\quad\delta>0\,.
   \end{equation}
   Moreover, the function $\bar\psi^2((\Theta^2)^{-1}(t))$ is concave for all
   $0<t<\infty$.

   Vice versa, any rate function $\bar \psi(t),\;0<t<\infty,$ determines by
   equation (\ref{tildeop}) in a unique manner a strictly increasing index
   function $\Psi(\lambda),\,0<\lambda<\infty,$ satisfying the limit conditions
   $\lim \limits_{\lambda \to +0} \Psi(\lambda)=0,\;$ $\lim \limits_{\lambda \to
   \infty} \Psi(\lambda)=\infty$ and (\ref{limquo}) which is concave for all
   $0<\lambda<\infty$ if $\bar\psi^2((\Theta^2)^{-1}(t))$ is concave for all
   $0<t<\infty$ which again implies the coincidence (\ref{coin}) of the error
   bounds.
 \end{proposition}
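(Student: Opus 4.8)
The plan is to reduce the entire statement to two short algebraic identities and then invoke the involution $\mathcal{S}$ from Remark~\ref{remconnew} together with Lemma~\ref{concave1}(b). First I would simplify the auxiliary function $\Theta$ of (\ref{aux1}). Combining it with (\ref{tildeop}) gives $\Theta(t)^2 = t\,\bar\psi(t)^2 = 1/\Psi^{-1}(1/t)$, so $\Theta$ is determined directly by $\Psi^{-1}$. Under the stated hypotheses $\Psi^{-1}$ is strictly increasing with range $(0,\infty)$, so $\Theta$ is a strictly increasing bijection of $(0,\infty)$ and both $\Theta^{-1}$ and $(\Theta^2)^{-1}$ are well defined.

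To prove the coincidence (\ref{coin}) I would substitute $s := \Theta^{-1}(\delta/R)$ and solve explicitly. From $\Theta(s)^2 = 1/\Psi^{-1}(1/s) = (\delta/R)^2$ one obtains $\Psi^{-1}(1/s) = R^2/\delta^2$, hence $s = 1/\Psi(R^2/\delta^2)$. Inserting this into $\bar\psi(s)^2 = 1/(s\,\Psi^{-1}(1/s))$ and multiplying by $R^2$ yields $R^2\bar\psi(s)^2 = \delta^2\,\Psi(R^2/\delta^2)$, which is exactly the square of (\ref{coin}). This part is a routine computation once the simplified form of $\Theta$ is in hand.

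The heart of the argument is the concavity claim, and the key observation is that the composite function appearing in (\ref{concavecond1}) is precisely the involution image of $\Psi$. Setting $r := (\Theta^2)^{-1}(t)$ gives $\Psi^{-1}(1/r) = 1/t$, hence $r = 1/\Psi(1/t)$, and substituting into $\bar\psi(r)^2 = 1/(r\,\Psi^{-1}(1/r)) = t/r$ produces
$$\bar\psi^2\bigl((\Theta^2)^{-1}(t)\bigr) = t\,\Psi(1/t) = [\mathcal{S}(\Psi)](t),$$
which is precisely the transformation (\ref{concaveprop2}). Since $\Psi$ is concave, Lemma~\ref{concave1}(b) shows at once that $\mathcal{S}(\Psi)$ is concave, establishing (\ref{concavecond1}). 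Thus no separate estimate is required: the composite concavity condition of the Math\'e--Pereverzev formula is nothing but the concavity of $\mathcal{S}(\Psi)$ guaranteed by Lemma~\ref{concave1}(b).

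For the converse I would reverse this identification. Given a rate function $\bar\psi$, the function $\Theta=\sqrt{t}\,\bar\psi$ is again a strictly increasing bijection of $(0,\infty)$, so $\Phi(t):=\bar\psi^2((\Theta^2)^{-1}(t))$ is a well-defined rate function, being increasing with limit $0$ at $+0$. The computation above shows $\Phi = \mathcal{S}(\Psi)$, whence $\Psi = \mathcal{S}(\Phi)$ by the involution property, i.e.\ $\Psi(\lambda) = \lambda\,\Phi(1/\lambda)$; the limit conditions on $\Psi$, including (\ref{limquo}), then follow from the dictionary recorded in Remark~\ref{remconnew} since $\Phi$ is a rate function. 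If moreover $\Phi$ is concave, Lemma~\ref{concave1}(b) applied to $\Phi$ gives that $\Psi=\mathcal{S}(\Phi)$ is concave, and the coincidence (\ref{coin}) is the identity already verified. The only point requiring care is the boundary behaviour of $\Psi$ at $0$ and $\infty$, but this is handled entirely by the involution statements of Remark~\ref{remconnew}, so I expect no genuine obstacle beyond bookkeeping.
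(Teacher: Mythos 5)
Your proposal is correct and follows essentially the same route as the paper: both arguments hinge on the identity $\bar\psi^2((\Theta^2)^{-1}(t))=t\,\Psi(1/t)=[\mathcal{S}(\Psi)](t)$, so that the composite concavity condition reduces to Lemma~\ref{concave1}(b), and both verify (\ref{coin}) by the same direct substitution through $\Theta^2(t)=1/\Psi^{-1}(1/t)$. Your treatment of the converse via $\Psi=\mathcal{S}(\Phi)$ is, if anything, slightly more explicit than the paper's appeal to the involution, but it is the same argument.
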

\begin{proof}
   First we find from Proposition~\ref{prop1} that $\bar \psi(t),\;t>0,$ is a
   rate function if $\Psi(\lambda),\,0<\lambda<\infty$ is a concave and strictly
   increasing index function satisfying the limit conditions $\lim
   \limits_{\lambda \to +0} \Psi(\lambda)=0,\;$ $\lim \limits_{\lambda \to
   \infty} \Psi(\lambda)=\infty$ and (\ref{limquo}). Then from the right
   equation in (\ref{connected}) (cf.~(\ref{tildeop})) we have
   $\Psi^{-1}(\lambda)=\lambda/\bar \psi^2(1/\lambda)$. By using the bijective
   substitution $u=1/\lambda$ in $(0,\infty)$ this yields
   $\Psi^{-1}(1/u)=\frac{1}{\Theta^2(u)}$ and $\frac{1}{u}=
   \Psi\left(\frac{1}{\Theta^2(u)}\right)$ for $0<u<\infty$. Multiplying the
   last equation by the factor $u\, \bar \psi ^2(u)$ we derive $${\bar
   \psi}^2(u)=u \,{\bar \psi}^2(u) \Psi\left(\frac{1}{\Theta^2(u)}\right)=
   \Theta^2(u) \Psi\left(\frac{1}{\Theta^2(u)}\right)$$ and $\bar \psi(u)=
   \Theta(u) \sqrt{\Psi\left(\frac{1}{\Theta^2(u)}\right)}$. By exploiting the
   bijection $t=\Theta(u)$ of $(0,\infty)$ into itself this provides us with the
   equation $ \bar \psi \left(\Theta^{-1}(t) \right)= t \sqrt{\Psi
   \left(\frac{1}{t^2} \right)}$ which implies the required identity
   (\ref{coin}) by inserting $t:=\delta/R$ and multiplying the arising equation
   by $R$.

    In a second step we note that by using the monotonically increasing
    bijection $s=\Theta^2(u)$ between $s \in (0,\infty)$ and $u \in (0,\infty)$
    and once more by exploiting the right equation in (\ref{connected}) we can
    write as follows for all $s>0$:
    $$\bar \psi^2((\Theta^2)^{-1}(s)) = \bar \psi^2(u) =\frac{\Theta^2(u)}{u}=
      \Theta^2(u)\,\Psi \left( \frac{1}{\Theta^2(u)}\right) = s
      \,\Psi\left(\frac{1}{s}\right)
    =[\mathcal{S}(\Psi)](s).
    $$
     Hence, by Lemma~\ref{concave1} (b) we immediately see that as required
    $\bar \psi^2((\Theta^2)^{-1}(s)),\,s>0,$ is concave if
    $\Psi(\lambda),\,\lambda>0,$ is concave.

     Since the involution $\mathcal{S}$ (cf.~Remark~\ref{remconnew}) preserves
     concavity, the reverse assertion formulated in
     Proposition~\ref{prop:onetoone} becomes immediately clear, since
     (\ref{tildeop}) represents a one-to-one correspondence between index
     functions $\bar \psi$ and strictly increasing functions $\Psi$ with the
     limit conditions under consideration.
    \end{proof}

We now investigate the concavity condition for the function $\bar
\psi^2((\Theta^2)^{-1}(s))$ in more detail. For this a characterisation of the
concavity of index functions is given in terms of the monotonicity of certain
divided differences.

\begin{lemma}
  \label{diffquot}
  Let $\psi$ be an index function. Then the three following statements are
  equivalent:
  \begin{enumerate}
    \item $\psi$ is concave
    \item $(\psi(s_0+s)-\psi(s_0))/s$ is a decreasing index function for all
    $s_0>0$
    \item $(\psi(s_0)-\psi(s_0-s))/s$ is an increasing continuous function
    $(0,s_0)\rightarrow \R_+$ for all $s_0>0$.
  \end{enumerate}
\end{lemma}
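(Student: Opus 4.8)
The plan is to prove the equivalence of the three statements by establishing a cycle of implications, say $(1)\Rightarrow(2)\Rightarrow(3)\Rightarrow(1)$, or alternatively by proving $(1)\Leftrightarrow(2)$ and $(1)\Leftrightarrow(3)$ separately. The unifying idea is that concavity of $\psi$ is equivalent to the monotonicity of its slopes, and the two divided-difference expressions in (2) and (3) are precisely the forward and backward slope functions. I would first fix the standard characterisation of concavity I intend to use: a function $\psi$ on $(0,\infty)$ is concave if and only if for every fixed point the \emph{secant slope} through that point is a monotone function of the other endpoint. This is the key property that Lemma~\ref{concave1}(a) already exploited implicitly, and I would make it explicit here.

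For the implication $(1)\Rightarrow(2)$, I would argue as follows. Assume $\psi$ concave and fix $s_0>0$. For $0<s<s'$ one must show that the forward difference quotient $q(s):=(\psi(s_0+s)-\psi(s_0))/s$ satisfies $q(s)\geq q(s')$. Writing $s_0+s$ as a convex combination of $s_0$ and $s_0+s'$, namely $s_0+s = \tfrac{s'-s}{s'}s_0 + \tfrac{s}{s'}(s_0+s')$, concavity gives $\psi(s_0+s)\geq \tfrac{s'-s}{s'}\psi(s_0)+\tfrac{s}{s'}\psi(s_0+s')$, and rearranging yields exactly $q(s)\geq q(s')$. Continuity and positivity of $q$ follow from those of $\psi$ (the positivity needs the increasing behaviour of concave index functions, which is available from Lemma~\ref{lem:increase}), so $q$ is a decreasing index function. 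The argument for $(1)\Rightarrow(3)$ is the mirror image: one writes $s_0-s$ as a convex combination of $s_0-s'$ and $s_0$ (for $0<s<s'<s_0$) and rearranges to obtain that the backward difference quotient is increasing. I would present one of these two directions in full detail and indicate that the other is symmetric.

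For the converse directions I would recover concavity from slope monotonicity. Assuming (2), take any three points $\lambda_0<\lambda_1<\lambda_2$ and show the concavity inequality at $\lambda_1$. Setting $s_0=\lambda_0$, the forward quotient being decreasing compares the slope over $[\lambda_0,\lambda_1]$ with that over $[\lambda_0,\lambda_2]$; rearranging the resulting inequality $q(\lambda_1-\lambda_0)\geq q(\lambda_2-\lambda_0)$ gives precisely
\[
\psi(\lambda_1)\geq \frac{\lambda_2-\lambda_1}{\lambda_2-\lambda_0}\psi(\lambda_0)+\frac{\lambda_1-\lambda_0}{\lambda_2-\lambda_0}\psi(\lambda_2),
\]
which is the definition of concavity. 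The implication $(3)\Rightarrow(1)$ is analogous using the backward quotient with base point $\lambda_2$. Since each of (2) and (3) individually implies (1), and (1) implies both, all three are equivalent.

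I do not expect a genuine obstacle here, since the content is the elementary and well-known fact that concavity is equivalent to monotonicity of secant slopes; the only points requiring minor care are bookkeeping ones. The main thing to watch is verifying that the difference quotients in (2) and (3) are genuine \emph{index functions} in the sense of Definition~\ref{def1}, i.e. continuous and \emph{positive} on their stated domains: continuity is inherited from $\psi$, but positivity of the forward quotient relies on $\psi$ being increasing, which holds for concave index functions by Lemma~\ref{lem:increase}. A second small subtlety is the differing domains --- (2) asserts a statement on all of $(0,\infty)$ while (3) restricts to $(0,s_0)$ since $s_0-s$ must stay positive --- so I would be careful to quantify the variable $s$ correctly in each case and to keep the convex-combination coefficients consistent when passing between the slope inequalities and the three-point concavity inequality.
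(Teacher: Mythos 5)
Your proposal is correct and follows essentially the same route as the paper: both directions rest on the standard three-point concavity inequality $(t_2-t_0)\psi(t_1)\geq(t_2-t_1)\psi(t_0)+(t_1-t_0)\psi(t_2)$, rearranged into monotonicity of the forward and backward secant slopes, with positivity of the quotients supplied by Lemma~\ref{lem:increase}. You merely spell out the ``simple algebraic manipulations'' that the paper leaves implicit, so no substantive difference.
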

\begin{proof}
  If $\psi$ is a concave index function then by Lemma~\ref{lem:increase} $\psi$
  is increasing and so both $(\psi(s_0+s)-\psi(s_0))/s$ and
  $(\psi(s_0)-\psi(s_0-s)/s$ are positive continuous functions for $s>0$ and
  $s\in (0,s_0)$, respectively. Furthermore by definition
  $$
    (t_2-t_0) \psi(t_1) \geq (t_2-t_1) \psi(t_0) + (t_1-t_0) \psi(t_2)
  $$
  and by simple algebraic manipulations and the right choice of $t_0< t_1 < t_2$
  one gets the second and third statement from the first.

  Conversely, if $(\psi(s_0+s)-\psi(s_0))/s$ is a decreasing index function for
  all $s_0>0$ one has for all $t_0< t_1 < t_2$
  $$
    \frac{\psi(t_1)-\psi(t_0)}{t_1-t_0} \geq \frac{\psi(t_2)-\psi(t_0)}{t_2-t_0}
  $$
  and thus $\psi$ is concave. A similar argument shows that $\psi$ is concave
  if the third statement holds.
\end{proof}
A direct consequence of this lemma is that for concave rate functions $\bar\psi$
one has
$$  \frac{\bar\psi(s_0)-\bar\psi(s_0-s)}{s} \leq \frac{\bar\psi(s_0)}{s_0} $$
as $\lim_{s\rightarrow 0}\, \bar\psi(s)=0$. Another consequence is
\begin{proposition}
  If $\psi(t)$ is a concave rate function then so is $\psi(\sqrt{t})^2$.
\end{proposition}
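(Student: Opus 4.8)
The plan is to dispose of the elementary rate-function properties first and then concentrate all the work on concavity, which is the only substantial point. Write $g(t):=\psi(\sqrt t)^2$ for $0<t<\infty$. Since $\psi$ is positive and continuous and $\sqrt{\,\cdot\,}$ is positive and continuous on $(0,\infty)$, the composition $\psi(\sqrt t)$ and hence its square $g$ are positive and continuous, so $g\in\mathcal{I}$. Because both $\psi$ and $\sqrt{\,\cdot\,}$ are increasing, $\psi(\sqrt t)$ is increasing, and squaring a positive increasing function preserves monotonicity; finally $\sqrt t\to 0$ as $t\to+0$ forces $\psi(\sqrt t)\to 0$ and thus $g(t)\to 0$. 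Hence $g$ is a rate function as soon as its concavity is established.

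For concavity I would use the chord (three-slope) characterisation, which is exactly the content of Lemma~\ref{diffquot}: it suffices to show that for all $0<t_1<t_2<t_3$ the divided differences decrease, i.e.
\[
  \frac{g(t_2)-g(t_1)}{t_2-t_1}\;\ge\;\frac{g(t_3)-g(t_2)}{t_3-t_2}.
\]
Setting $s_i:=\sqrt{t_i}$ (so $0<s_1<s_2<s_3$) and factoring both the difference of squares in each numerator, $\psi(s_j)^2-\psi(s_i)^2=(\psi(s_j)-\psi(s_i))(\psi(s_j)+\psi(s_i))$, and in each denominator, $s_j^2-s_i^2=(s_j-s_i)(s_j+s_i)$, every chord slope splits into a product of two nonnegative factors,
\[
  \frac{g(t_j)-g(t_i)}{t_j-t_i}
  \;=\;\frac{\psi(s_j)-\psi(s_i)}{s_j-s_i}\,\cdot\,\frac{\psi(s_i)+\psi(s_j)}{s_i+s_j}.
\]
The strategy is then to dominate the left-hand product by the right-hand product \emph{factor by factor}.

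The first factor is a divided difference of $\psi$, which decreases along the arguments precisely because $\psi$ is concave (Lemma~\ref{diffquot}), so the $(s_1,s_2)$-value dominates the $(s_2,s_3)$-value. For the second factor I would rewrite it as a genuine weighted average,
\[
  \frac{\psi(a)+\psi(b)}{a+b}
  =\frac{a}{a+b}\,\frac{\psi(a)}{a}+\frac{b}{a+b}\,\frac{\psi(b)}{b},
\]
a convex combination of $\psi(a)/a$ and $\psi(b)/b$ that therefore lies between these two values. Since $\psi$ is concave, Lemma~\ref{concave1}(a) gives that $\psi(\lambda)/\lambda$ is decreasing, so $\psi(s_1)/s_1\ge\psi(s_2)/s_2\ge\psi(s_3)/s_3$; consequently the $(s_1,s_2)$-average is $\ge\psi(s_2)/s_2$ while the $(s_2,s_3)$-average is $\le\psi(s_2)/s_2$, and the second factor likewise dominates. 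As all four factors are nonnegative (using that $\psi$ is positive and increasing), factorwise domination yields the product inequality and hence the desired chord inequality for $g$. The main obstacle is exactly the monotonicity of this second factor: it is tempting but wrong to argue by naive composition, since $\psi(\sqrt t)$ is itself concave yet squaring a concave function need not preserve concavity. The point is that the effect of squaring is compensated by the $\sqrt{\,\cdot\,}$ through the average-ratio estimate, and the sandwich around the common value $\psi(s_2)/s_2$ together with careful nonnegativity bookkeeping is what makes the factorwise comparison go through.
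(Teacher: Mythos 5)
Your proof is correct, and at its core it rests on the same decomposition as the paper's: both factor the difference quotient of $\psi(\sqrt{t})^2$ as
\[
\frac{\psi(s_j)^2-\psi(s_i)^2}{s_j^2-s_i^2}
=\frac{\psi(s_j)-\psi(s_i)}{s_j-s_i}\cdot\frac{\psi(s_i)+\psi(s_j)}{s_i+s_j},
\]
and reduce everything to monotonicity of these two factors (the paper's $\omega(s)=\sigma(s)\,\frac{s\sigma(s)+2\psi(s_0)}{s+2s_0}$ is exactly this factorisation anchored at $s_i=s_0$). Where you genuinely diverge is in the second factor. The paper replaces $\sigma(s_1)$ by $\sigma(s_2)$, studies the resulting rational function of $s_1$, and needs the auxiliary estimate $\sigma(s_2)\le\psi(s_0)/s_0$ imported from the remark after Lemma~\ref{diffquot}; as printed, that passage also contains several typographical slips. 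You instead work with a symmetric three-point configuration $s_1<s_2<s_3$ and note that $\frac{\psi(a)+\psi(b)}{a+b}$ is a convex combination of $\psi(a)/a$ and $\psi(b)/b$, so that by Lemma~\ref{concave1}~(a) the $(s_1,s_2)$-value and the $(s_2,s_3)$-value are sandwiched on opposite sides of $\psi(s_2)/s_2$. This yields a cleaner factor-by-factor comparison that uses only positivity and concavity of $\psi$, with the rate-function limit entering only in the trivial verification that $\psi(\sqrt{t})^2\to 0$. Two minor bookkeeping points, neither a gap: the three-slope inequality you invoke is the standard equivalent of concavity rather than literally items (2)--(3) of Lemma~\ref{diffquot} (they are interchangeable by the usual rearrangement), and the nonnegativity of the first factor uses that a positive concave function is increasing, i.e.\ Lemma~\ref{lem:increase}.
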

\begin{proof}
   By lemma~\ref{diffquot} we have to show that for all $t_0>0$ the function
   $(\psi(\sqrt{t+t_0})^2-\psi(t-0)^2)/t$ is a decreasing index function. As the
   mapping $s\rightarrow (s+s_0)^2$ is monotone it is sufficient to show that
   $$
     \omega(s) = \frac{\psi(s+s_0)^2-\psi(s_0)^2}{(s+s_0)^2 - s_0^2}
   $$
   is monotonically decreasing.

   As $\psi$ is assumed to be concave, Lemma~\ref{diffquot} implies that
   $$
    \sigma(s) = \frac{\psi(s+s_0)-\psi(s_0)}{s}
   $$
   is monotonically decreasing. Furthermore
   $$
     \omega(s) = \sigma(s) \left(\frac{\psi(s+s_0)+\psi(s_0)}{s+2s_0}\right) =
       \sigma(s) \frac{s\sigma(s)+ 2\psi(s_0)}{s+2s_0}.
   $$
   Now let $s_2< s_2$. As $\sigma(s)$ is monotonically decreasing on has
   $$
     \omega(s_1) \geq \sigma(s_2) \frac{s_1\sigma(s_2)+2\psi(s_0)}{s_1+2s_0}
       = \sigma(s_2)^2 \frac{s_1+2\psi(s_0)/\sigma(s_2)}{s-1+2s_0}.
   $$
   The right-hand side is a decreasing function of $s_1$ if
   $2 s_0 \leq 2 \psi(s_0)/\sigma(s_2)$, i.e., $\sigma(s-2) \leq \psi(s_0)$.
   This is a consequence of Lemma~\ref{diffquot} as stated in the remark after
   the lemma.
   Replacing $s_1$ by $s_2$ thus gives a lower bound for $\omega(s_1)$ and
   thus
   $$
     \omega(s_1) \geq \sigma(s_2)^2 \frac{s_2+2\psi(s_0)/\sigma(s_2)}{s_2+2s_0}
     =\omega(s_2).
   $$
   It follows that $\omega$ is monotonically decreasing.
\end{proof}
A consequence of this lemma is that for the concavity of the function
$\bar \psi^2((\Theta^2)^{-1}(s)) = \bar \psi^2(\Theta^{-1}(\sqrt{s}))$ it is
thus sufficient to show that $\bar \psi \circ \Theta^{-1}$ is concave.

Finally we conjecture that a similar result to the proposition above also holds
more generally, i.e., that a sufficient condition for concavity of $g\circ\psi
\circ g^{-1}$ is the concavity of $\psi$ where $g$ belongs to a class of
suitably chosen functions.

  \section{Linear regularisation approaches}

  Our goal in this section is to draw conclusions from Corollary~\ref{cor1} for
  \emph{linear regularisation methods}. Taking into account the setting of
  Corollary~\ref{cor1} we assume throughout this section that the index function
  $\Psi(\lambda)$ is concave and strictly increasing for all $0<\lambda<\infty$
  satisfying the limit conditions $\lim \limits_{\lambda \to +0}
  \Psi(\lambda)=0,$ $\lim \limits_{\lambda \to \infty} \Psi(\lambda)=\infty$
  (cf.~Lemma~\ref{lem2}), and (\ref{limquo}). Moreover, we set   
  \begin{equation} \label{eq:ass1reg}
    \chi(\lambda):=\frac{\Psi^{-1}(\lambda)}{\lambda}\,, \quad 0<\lambda<\infty,
    \quad \mbox{and} \quad \bar \psi(t):=
    \frac{1}{\sqrt{\chi\left(\frac{1}{t}\right)}}\,, \quad 0<t<\infty \,.
  \end{equation}
  Then $\chi$ is an increasing index function with $\lim \limits_{\lambda \to
  \infty} \chi(\lambda)=\infty$ and $\bar \psi$ is an increasing index function
  with $\lim \limits_{t \to +0} \bar \psi(t)=0$, hence a rate function. As
  outlined in section~3 under these assumptions we have $\bar\psi(A^*A)[B(R)] =
  B_\chi(R)$ and the best case for regularised solutions $f_\alpha$
  approximating the exact solution $f \in X_\chi$ based on data $g^\delta$
  satisfying (\ref{noise}) by using an a priori choice $\alpha=\alpha(\delta)$
  or a posteriori choice $\alpha=\alpha(\delta,g^\delta)$ is to achieve the
  order optimal convergence rate (\ref{specialrate1}). It is a specific
  consequence of interpolation theory and can be seen easily by inspection of
  Corollary~\ref{cor1} that a successful use requires the focus on
  regularisation methods which yield regularised solutions of appropriate
  smoothness. Precisely, there must be a ball $B_\chi(R)$ to which the elements
  $f_\alpha$ belong for all $\alpha>0$ attributed to sufficiently small
  $\delta>0$ and $g^\delta$ satisfying (\ref{noise}).

  \subsection{General linear regularisation schemata}

  In a first approach we are going to consider \emph{linear regularisation
 schemes} as described in many textbooks on linear regularisation theory (see,
 e.g., \cite[Chap.~4]{EHN96}, \cite[Chap.~2]{Groe84} and
 \cite{BakuGon04,Bau87,Kirsch96,Lou89,Rieder03}). We consider approximate
 solutions
  \begin{equation}
   \label{eq:noisyregmeth}
  f_\alpha := h_\alpha(A^*A) A^* g^\delta.
  \end{equation}
  to $f$ based on a family of piecewise continuous real functions $h_\alpha(t),
  \;\, 0<t\le \|A\|^2$, to which we assign bias functions
  $$r_\alpha(t):=t\,h_\alpha(t)-1, \;\, 0<t\le \|A\|^2\,.$$ 
  These functions depend on a regularisation parameter $\alpha \in
  (0,\alpha_{max}]$, where $\alpha_{max}$ may be a finite real number or
  $\infty$. Small $\alpha>0$ characterise good approximation of the original
  problem (\ref{opeq}), whereas larger values $\alpha$ are connected with more
  stability. Hence, an appropriate trade-off between the two conflicting goals
  approximation and stability can be controlled by the choice of $\alpha.$ We
  say that such a function $h_\alpha$ describes a linear regularisation method
  if the properties
\begin{equation}  \label{v1neu}
  \lim \limits _{\alpha \to +0} \,r_\alpha(t)= 0, \qquad 0<t \le \|A\|^2,
\end{equation}
and
\begin{equation} \label{v1}
\sup \limits _{0<\alpha \le \alpha_{max}}\; \sup \limits _{0<t \le \|A\|^2}\;
t\,|\,h_\alpha(t)\,| \, \le \, C_1
\end{equation}
with a constant $C_1>0$ hold.
Because of (\ref{v1}) we have another constant $C_2>0$ such that
$$\sup \limits _{0<\alpha \le \alpha_{max}}\; \sup \limits _{0<t \le \|A\|^2}
  |\,r_\alpha(t)\,| \, \le \,C_2  $$
and hence for all $0<\alpha \le \alpha_{max}$ the estimate
$$\|Af_\alpha-g^\delta\|=\|(Ah_\alpha(A^*A)A^*-I)g^\delta\| \le \left[\sup
  \limits _{0<t \le \|A\|^2} \; |\,r_\alpha(t)\,|\right] \, \|g^\delta\| \le
  C_2\, \|g^\delta\|.$$
This implies  the limit condition
$\lim \limits_{ \alpha \to +0} \|Af_\alpha-g^\delta\|=0$ for all data $g^\delta
\in Y$. As a consequence we have that there is always a parameter choice
$\alpha=\alpha(\delta,g^\delta),\; 0<\delta \le \delta_{max},$ such that
$$\|Af_{\alpha(\delta,g^\delta)}-g^\delta\| \le C_{dis}\,\delta \qquad (0<\delta
  \le \delta_{max})$$
for some prescribed constant $C_{dis}>0$. If the mapping $\alpha \mapsto
\|Af_{\alpha}-g^\delta\|$ is even continuous, then the
discrepancy principle can be realised by a parameter choice
$\alpha_{dis}=\alpha_{dis}(\delta,g^\delta)$ satisfying the equation
(\ref{Morozov}).

Here we call a rate function $\bar \varphi$ a \emph{qualification} of the
regularisation method generated by $h_\alpha$ if there is a constant
$C_{quali}>0$ such that
\begin{equation} \label{quali}
  \sup \limits _{0<t \le \|A\|^2} \,|r_\alpha(t)| \bar \varphi(t) \,\le \,
  C_{quali} \; \bar \varphi(\alpha) , \qquad 0<\alpha \le \alpha_{max}.
\end{equation}
Now we are going to study under what conditions the inequality (\ref{eq:ratexi})
in Corollary~\ref{cor11} can be fulfilled here with $\bar \xi(\delta)=\delta$.
First we obtain
\begin{equation} \label{help1}
\|Af_\alpha-g\| = \|A r_\alpha(A^*A)f + Ah_\alpha(A^*A)A^*(g^\delta-Af)\| \le
\|A r_\alpha(A^*A)f\| +C_1 \delta.
\end{equation}
In order to apply that corollary for obtaining a convergence rate
(\ref{specialrate1}) we assume $f \in B_\chi(R_1)=\bar \psi(A^*A)[B(R_1)]$
taking into account the cross-connection (\ref{eq:ass1reg}). So let $f=\bar
\psi(A^*A)v,\; \|v\| \le R_1$. Provided that $\Theta(t):=\sqrt{t} \bar \psi(t)$
is a qualification of the method with constant $C_{quali}>0$ this gives with
(\ref{help1})
\begin{equation} \label{help2}
  \|Af_\alpha-g\| \le \left[ \sup \limits _{0<t \le \|A\|^2} \,r_\alpha(t)
  \Theta(t)\right] R_1 + C_1 \delta \le C_{quali} R_1\, \Theta(\alpha) + C_1
  \delta
\end{equation}
and hence an estimate of type (\ref{eq:ratexi}) is fulfilled with $\bar
\xi(\delta)=\delta$ when an a priori parameter choice $\alpha
=\Theta^{-1}(\delta)$ is used.

Next we will check whether $f_\alpha \in B_\chi(R_2)$ for some $0<R_2<\infty$.
We have $$f_\alpha= h_\alpha(A^*A) A^* (g^\delta-Af)+ h_\alpha(A^*A) A^*Af$$ and
after some reformulation
$$f_\alpha= \bar \psi(A^*A)\left[h_\alpha(A^*A) (\psi(A^*A))^{-1}(A^*A)^{1/2}
\tilde g + h_\alpha(A^*A) A^*Av\right]$$ 
with $\;\|\tilde g\| \le \delta$, since the different functions of $A^*A$ are
commutable. Now let the interplay of the regularisation method expressed by
$h_\alpha(t)$ and the parameter choice $\alpha=\alpha(\delta,g^\delta)$ be such
that there is a constant $C_{para}>0$ with
\begin{equation} \label{parabound}
  \sup \limits _{0<t \le \|A\|^2}
  \frac{\sqrt{t}\,|h_{\alpha(\delta,g^\delta)}(t)|\,\delta}{\bar \psi(t)}\, \le
  \, C_{para}, \qquad 0<\delta \le \delta_{max}\,.
\end{equation}
The upper bound $C_{para}$ in (\ref{parabound}) must hold for all data $g^\delta
\in Y$ associated with the noise level $\delta>0$ and satisfying (\ref{noise}),
where the case of an a priori parameter choice $\alpha=\alpha(\delta)$ should be
included as a special case. Under (\ref{parabound}) we have with (\ref{v1})
$$ \|h_{\alpha(\delta,g^\delta)}(A^*A) (\psi(A^*A))^{-1}(A^*A)^{1/2} \tilde g +
   h_{\alpha(\delta,g^\delta)}(A^*A) A^*Av\| \le R_2:=C_{para}+ C_1\,R_1,$$
in other terms $f_{\alpha(\delta,g^\delta)} \in \bar
\psi(A^*A)[B(R_2)]=B_\chi(R_2)$.

If there is a function $\Gamma(\alpha)$ satisfying for sufficiently small
$\alpha>0$ the inequality
\begin{equation} \label{para1}
  \left[\sup \limits _{0<t \le \|A\|^2} \frac{\sqrt{t}|h_\alpha(t)|}{\bar
  \psi(t)}\right]\, \le \,\Gamma(\alpha)
\end{equation}
such that
\begin{equation} \label{para2}
\delta\,\Gamma(\alpha(\delta,g^\delta))\,\le C_{para}, \qquad 0<\delta \le
\delta_{max}\,,
\end{equation}
this represents a sufficient condition for (\ref{parabound}). In particular, if
moreover the a priori parameter choice
$\alpha(\delta,g^\delta):=\Theta^{-1}(\delta)$ satisfies (\ref{para2}) we have
an estimate of type (\ref{eq:ratexi}) with $\bar \xi(\delta)=\delta$ for that a
priori parameter choice whenever $\Theta$ is a qualification of the
regularisation method under consideration.

 Hence the considerations above gave a sketch of the proof for the following
proposition as a consequence of Corollary~\ref{cor11}:

\begin{proposition} \label{pro2}
Under the standing assumptions of this section including (\ref{eq:ass1reg}) let
$f \in X_\chi=\range(\bar \psi(A^*A))$ and consider regularised solutions
(\ref{eq:noisyregmeth}) with a generator function $h_\alpha$ that determines the
regularisation method and satisfies (\ref{v1neu}) -- (\ref{v1}) as well as
(\ref{para1}) with some function $\Gamma$ such that $\Theta(t):=\sqrt{t}\bar
\psi(t)$ satisfies (\ref{para2}) with some constant $C_{para}>0$ and is a
qualification of the method (cf.~(\ref{quali})).
Then for the a priori regularisation parameter choice
$\alpha=\alpha(\delta):=\Theta^{-1}(\delta) \to +0$ as $\delta \to +0$ we have
the convergence rate
\begin{equation} \label{pro2rate}
 \|f-f_\alpha\|\,=\,\mathcal{O}\left(\delta\,\sqrt{\Psi(\bar K/\delta^2)}\right) \qquad
   \mbox{as} \qquad \delta \to +0
   \end{equation}
with some constant $\bar K>0$.
\end{proposition}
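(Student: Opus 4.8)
The plan is to read this off as a direct application of Corollary~\ref{cor11} with the rate function $\bar\xi(\delta)=\delta$: I must verify its three hypotheses for the a priori choice $\alpha=\alpha(\delta):=\Theta^{-1}(\delta)$, namely $f\in B_\chi(R_1)$, a \emph{uniform} membership $f_\alpha\in B_\chi(R_2)$, and the residual bound (\ref{eq:ratexi}). The parameter choice is well defined because $\Theta(t)=\sqrt{t}\,\bar\psi(t)$ is strictly increasing with $\lim_{t\to+0}\Theta(t)=0$ and $\lim_{t\to\infty}\Theta(t)=\infty$, so $\Theta^{-1}$ maps $(0,\infty)$ bijectively onto itself. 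The solution assumption $f\in X_\chi=\range(\bar\psi(A^*A))$ furnishes $f=\bar\psi(A^*A)v$ with $\|v\|\le R_1$, which is exactly $f\in B_\chi(R_1)$ via the identity $\bar\psi(A^*A)[B(R_1)]=B_\chi(R_1)$ from the standing assumptions (cf.~(\ref{eq:ass1reg})).

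Next I would establish the residual bound. Starting from the splitting (\ref{help1}) and inserting $f=\bar\psi(A^*A)v$, the qualification hypothesis (\ref{quali}) for $\Theta$ yields (\ref{help2}), i.e.
$$\|Af_\alpha-g\|\le C_{quali}\,R_1\,\Theta(\alpha)+C_1\,\delta.$$
Substituting $\alpha=\Theta^{-1}(\delta)$ gives $\Theta(\alpha)=\delta$, hence $\|Af_\alpha-g\|\le\bar C\,\delta$ with $\bar C:=C_{quali}R_1+C_1$. This is precisely (\ref{eq:ratexi}) with $\bar\xi(\delta)=\delta$.

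The step I expect to be the main obstacle is the uniform ball membership $f_\alpha\in B_\chi(R_2)$. The idea is to split $f_\alpha=h_\alpha(A^*A)A^*(g^\delta-Af)+h_\alpha(A^*A)A^*Af$ and to factor $\bar\psi(A^*A)$ out of both terms. For the data-error term one uses the spectral identity $A^*(g^\delta-Af)=(A^*A)^{1/2}\tilde g$ with $\|\tilde g\|\le\|g^\delta-Af\|\le\delta$, so that after pulling out $\bar\psi(A^*A)$ this term contributes $h_\alpha(A^*A)\,[\bar\psi(A^*A)]^{-1}(A^*A)^{1/2}\tilde g$, whose norm is bounded by spectral calculus by $\sup_{0<t\le\|A\|^2}\frac{\sqrt{t}\,|h_\alpha(t)|}{\bar\psi(t)}\,\delta$; condition (\ref{parabound}) — guaranteed by (\ref{para1}) together with (\ref{para2}) at $\alpha=\Theta^{-1}(\delta)$ — bounds this by $C_{para}$. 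For the approximation term, $h_\alpha(A^*A)A^*Af=\bar\psi(A^*A)\,h_\alpha(A^*A)(A^*A)v$, and (\ref{v1}) bounds the inner factor by $C_1\|v\|\le C_1R_1$. Thus $f_\alpha=\bar\psi(A^*A)w$ with $\|w\|\le R_2:=C_{para}+C_1R_1$, uniformly in $\delta\in(0,\delta_{max}]$ and in admissible $g^\delta$, i.e.~$f_\alpha\in B_\chi(R_2)$. The delicate points are exactly the spectral rewriting $A^*w=(A^*A)^{1/2}\tilde g$ with controlled norm, and confirming that (\ref{parabound}) genuinely follows from the interplay conditions (\ref{para1})--(\ref{para2}) for this specific a priori choice.

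With all hypotheses of Corollary~\ref{cor11} verified, formula (\ref{eq:specialxi}) gives
$$\|f-f_\alpha\|\le\bar C\,\delta\,\sqrt{\Psi\left(\left[\frac{R_1+R_2}{\bar C\,\delta}\right]^2\right)}=\bar C\,\delta\,\sqrt{\Psi\left(\bar K/\delta^2\right)},\qquad \bar K:=\frac{(R_1+R_2)^2}{\bar C^2},$$
and since $\bar C$ is a constant this is the asserted rate (\ref{pro2rate}); that the right-hand side indeed tends to zero as $\delta\to+0$ is the concluding clause of Corollary~\ref{cor11}, resting on the limit condition (\ref{limquo}). In short, every ingredient is already prepared in the discussion preceding the proposition, and the proof amounts to assembling them into a single invocation of Corollary~\ref{cor11}.
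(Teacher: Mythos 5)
Your proposal is correct and follows essentially the same route as the paper: the text preceding Proposition~\ref{pro2} is precisely this argument (the residual bound via (\ref{help1})--(\ref{help2}) and the qualification of $\Theta$, the uniform membership $f_\alpha\in B_\chi(R_2)$ with $R_2=C_{para}+C_1R_1$ via the splitting of $f_\alpha$ and the polar-decomposition step $A^*(g^\delta-Af)=(A^*A)^{1/2}\tilde g$, and a final appeal to Corollary~\ref{cor11}). The only cosmetic difference is that you make the constant $\bar K=(R_1+R_2)^2/\bar C^2$ explicit, which the paper leaves implicit.
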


Note that in Proposition~\ref{pro2} the rate (\ref{pro2rate}) also holds for any
other parameter choice $\alpha=\alpha(\delta,g^\delta)$ that fulfils the
inequalities (\ref{para2}) and
\begin{equation} \label{ratedelta}
\|Af_{\alpha(\delta,g^\delta)}-g\| \le \bar C\,\delta, \qquad 0<\delta \le
\delta_{max},
\end{equation}
with some constant $\hat C>0$.

\begin{example} {\rm
The most prominent example of a linear regularisation method
(\ref{eq:noisyregmeth}) is the Tikhonov regularisation with the generator
function $h_\alpha(t)=\frac{1}{t+\alpha}$ and with the bias function
$r_\alpha(t)=\frac{\alpha}{t+\alpha}$, where the requirements (\ref{v1neu}) and
(\ref{v1}) are satisfied for the constants $C_1=C_2=1$. It is well known that
all concave rate functions $\bar \varphi$ are qualifications of the method
satisfying (\ref{quali}) with the constant $C_{quali}=1$. From that class we
consider the monomials $\bar \varphi(t)=t^\nu$ for exponents $0<\nu \le 1.$ Then
$\Theta(t)=\sqrt{t} \bar \psi(t)$ is a qualification with the same constant for
the Tikhonov regularisation in case of a rate function $\bar \psi(t)=t^\mu$ with
$0< \mu \le 1/2$. Taking into account (\ref{eq:ass1reg}) this rate function is
associated with $\chi(\lambda)= \lambda^{2\mu}$ and the strictly concave
function $\Psi(\lambda)=\lambda^{\frac{1}{2\mu+1}}$. By the estimate
(\ref{help2}) we have then (\ref{ratedelta}) with $\bar C=R_1+1$ for
$f=(A^*A)^\mu v, \;\|v\| \le R_1$ and for the a priori parameter choice
 \begin{equation} \label{apriH}
 \alpha=\Theta^{-1}(\delta)=\delta^{\frac{2}{2\mu+1}}\,.
 \end{equation}

To derive a function $\Gamma$ such that (\ref{para1}) is valid, we exploit the
inequality
$$ \frac{t^\kappa}{t+\alpha} \,\le \, (1-\kappa)^{1-\kappa}\kappa^\kappa\, 
    \alpha^{\kappa-1}\,,$$
which holds for all $t>0,\, \alpha>0$ and $0 < \kappa < 1$. In the limit case
$\kappa=0$ we also have the inequality $1/(t+\alpha) \le 1/\alpha.$ Thus there
is a constant $\hat c>0$ depending on $\kappa \in [0,1)$ such that $
\frac{t^\kappa}{t+\alpha} \le \frac{\hat c}{\alpha^{1-\kappa}}$. By setting
$\kappa:=1/2-\mu$ we obtain for $\bar \psi(t)=t^\mu,\;0<\mu \le 1/2$ the
inequality (\ref{para1}) with the function $$\Gamma(\alpha)=\frac{\hat
c}{\alpha^{\,\mu+\frac{1}{2}}}\,.$$ Then one easily verifies that
$\delta\,\Gamma(\Theta^{-1}(\delta)) \le \frac{\hat c
\delta}{\left(\delta^{\frac{2}{2\mu+1}}\right)^{\mu+\frac{1}{2}}}=\hat c$ and
that (\ref{para2}) is fulfilled with $C_{para}=\hat c$. Hence
Proposition~\ref{pro2} applies and we obtain for the parameter choice
(\ref{apriH}) and all $0<\mu \le 1/2$ the optimal convergence rate 
$$\|f-f_\alpha\|=\mathcal{O}\left(\delta^{\frac{2\mu}{2\mu+1}}\right) \quad
   \mbox{as} \quad \delta \to +0\,.$$
         The best possible rate obtained in that way is
  $\|f-f_\alpha\|=\mathcal{O}\left(\sqrt{\delta}\right)$ for $\mu=1/2$. For $\mu
  >1/2$ the function $\Psi$ remains strictly concave, but a finite function
  $\Gamma(\alpha)$ in (\ref{para1}) fails to exist, since we have $\sup \limits
  _{0<t \le \|A\|^2} \frac{\sqrt{t}}{\bar \psi(t)(t+\alpha)}=+\infty$.
      The limitation of Proposition~\ref{pro2} to lower H\"older rates than the
   saturation of Tikhonov's method admits seems to be a consequence of the fact
   that our approach based on Corollary~\ref{cor11} and the construction
   (\ref{eq:noisyregmeth}) do not interact good enough in case of higher
   smoothness of $f$. In order to overcome that effect, we will consider another
   approach in the following subsection.
}\end{example}

\subsection{Regularisation with unbounded operators and range inclusions}

In a second approach, under a non-standard source condition (\ref{Gsmooth})
characterising the available a priori knowledge on the solution smoothness, we
exploit a variant of the Tikhonov regularisation with regularised solutions
\begin{equation}
   \label{eq:Gregmeth}
  f_\alpha := G(GA^*AG+\alpha I)^{-1}GA^*g^\delta\,,
  \end{equation}
  where $G: X \to X$ is an injective bounded self-adjoint positive definite
linear operator $G: X \to X$ with non-closed range, i.e., zero is an
accumulation point of the spectrum $\spec(G)$ of the operator $G$. Since the
unbounded linear operator with $B=G^{-1}: \range(G) \subseteq X \to X$ is
frequently a differential operator, this approach is sometimes called
\emph{regularisation with differential operators}. Precisely, by construction
the element $f_\alpha \in \range(G)$ is well-defined for all $\alpha>0$ as the
minimiser of the extremal problem $$T_\alpha(\tilde f):= \|A \tilde
f-g^\delta\|^2+ \alpha \|B \tilde f\|^2 \to \min, \quad \mbox{subject to} \quad
\tilde f \in \range(G),$$ and then the penalty term in $T_\alpha$ contains
derivatives of the function $\tilde f$.

  To apply Corollary~\ref{cor11} under our setting (\ref{eq:ass1reg}) we assume
  $f \in G[B(R_1)]$, with $G[B(R)]$ from (\ref{Gset}), and a link condition
  \begin{equation} \label{linkGchi}
  \range(G) \subseteq X_\chi\ =\range(\bar \psi(A^*A))\,,
  \end{equation}
  which is equivalent to
  \begin{equation} \label{linkGpsi}
    \|G w \| \le C \,\|\bar \psi(A^*A) w\|, \qquad \mbox{for all} \quad w \in X,
  \end{equation}
  with some $C>0$. Then from \cite[Lemma~6.2]{HofMAt07} we obtain that $f \in
  G[B(R_1)]$ implies $f \in B_\chi(CR_1)=\bar \psi(A^*A)[B(CR_1)]$.

  Along the lines of the paper \cite{ChengYam00} by {\sc Cheng} and {\sc
  Yamamoto} we consider an a priori parameter choice $\alpha=\alpha(\delta)$ as
  \begin{equation} \label{cheng}
  \underline c \, \delta ^2 \le \alpha(\delta)\le \overline c \,\delta^2, \qquad
  0<\delta \le \delta_{max},
  \end{equation}
  with constants $0<\underline c \le \overline c<\infty$, for which we obtain
  from $T_\alpha(f_\alpha) \le T_\alpha(f)$ the inequalities
  $$\|Af_{\alpha(\delta)}-g^\delta\|^2+\alpha(\delta)
    \|G^{-1}f_{\alpha(\delta)}\|^2 \le \|Af-g^\delta\|^2+\alpha(\delta)
    \|G^{-1}f\|^2 \le \delta^2 +\overline c\,\delta^2\,R_1^2\,.$$
  Now we have
  $$\|Af_{\alpha(\delta)}-g\| \le \bar C \,\delta, \quad \mbox{with} \quad \bar
    C= \sqrt{1+\overline{c}\,R_1^2}+1 $$
  satisfying condition (\ref{eq:ratexi}) with $\xi(\delta)=\delta$  and
  $$\|G^{-1}f_{\alpha(\delta)}\| \le
    \sqrt{\frac{\delta^2}{\alpha(\delta)}+\|G^{-1}f\|^2} \le
    \sqrt{\frac{1}{\underline c}+R_1^2}=:R_2\,.$$
  This yields $f_{\alpha(\delta)} \in G[B(R_2)]$, thus $f_{\alpha(\delta)} \in
 B_\chi(CR_2)=\bar \psi(A^*A)[B(CR_2)]$ and consequently an estimate of type
 (\ref{eq:specialxi}) with $\xi(\delta)=\delta$ and $CR_1,CR_2$ instead of
 $R_1,R_2$.
  With the above considerations we have shown the convergence rate result of the
  following proposition again as a consequence of Corollary~\ref{cor11}:

\begin{proposition} \label{pro3}
Under the standing assumptions of this section including (\ref{eq:ass1reg}) let
$f$ satisfy (\ref{Gsmooth}), where the link condition (\ref{linkGchi}) is valid.
Then for the a priori regularisation parameter choice (\ref{cheng}) we have the
convergence rate (\ref{pro2rate}) with some constant $\bar K>0$.
\end{proposition}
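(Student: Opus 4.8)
The plan is to recognise Proposition~\ref{pro3} as a direct instance of Corollary~\ref{cor11} applied to the regularised solutions (\ref{eq:Gregmeth}). Thus I would verify the three hypotheses of that corollary: that the exact solution $f$ lies in a fixed ball of $X_\chi$, that every regularised solution $f_{\alpha(\delta)}$ lies in another fixed ball of $X_\chi$, and that the residual obeys a bound of the form (\ref{eq:ratexi}) with $\bar\xi(\delta)=\delta$. The standing assumptions of this section already secure the structural requirements on $\Psi$ (concavity, strict monotonicity, the limit conditions and (\ref{limquo})) that Corollary~\ref{cor11} presupposes, so once the three hypotheses above are in place its conclusion (\ref{eq:specialxi}) furnishes the asserted rate (\ref{pro2rate}) after absorbing the radii and the residual constant into a single constant $\bar K>0$.

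For the first hypothesis I would pass from the source condition (\ref{Gsmooth}) to a scale membership. Writing $f=Gw$ with $\|w\|\le R_1$ and invoking the equivalence of the range inclusion (\ref{linkGchi}) with the norm bound (\ref{linkGpsi}), one concludes via \cite[Lemma~6.2]{HofMAt07} that $f\in G[B(R_1)]$ forces $f\in B_\chi(CR_1)=\bar\psi(A^*A)[B(CR_1)]$, where the cross-connection $X_\chi=\range(\bar\psi(A^*A))$ from (\ref{eq:ass1reg}) is used. For the remaining two hypotheses I would use that $f_\alpha$ from (\ref{eq:Gregmeth}) is, by construction, the minimiser over $\tilde f\in\range(G)$ of the Tikhonov functional $T_\alpha(\tilde f)=\|A\tilde f-g^\delta\|^2+\alpha\|G^{-1}\tilde f\|^2$, so that the variational comparison $T_\alpha(f_{\alpha(\delta)})\le T_\alpha(f)$ holds. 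Feeding in the admissible competitor $f$ together with the noise bound $\|Af-g^\delta\|\le\delta$ and $\|G^{-1}f\|=\|w\|\le R_1$ yields the single chain
\[
  \|Af_{\alpha(\delta)}-g^\delta\|^2+\alpha(\delta)\,\|G^{-1}f_{\alpha(\delta)}\|^2
  \le \delta^2+\alpha(\delta)\,R_1^2 .
\]

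Separating the two non-negative terms on the left and inserting the two-sided bracketing $\underline c\,\delta^2\le\alpha(\delta)\le\overline c\,\delta^2$ from (\ref{cheng}) then produces both required bounds at once: the discrepancy estimate $\|Af_{\alpha(\delta)}-g^\delta\|\le\sqrt{1+\overline c\,R_1^2}\,\delta$, which by the triangle inequality and (\ref{noise}) gives $\|Af_{\alpha(\delta)}-g\|\le\bar C\,\delta$ with $\bar C=\sqrt{1+\overline c\,R_1^2}+1$ (this is (\ref{eq:ratexi}) with $\bar\xi(\delta)=\delta$), and the penalty estimate $\|G^{-1}f_{\alpha(\delta)}\|\le\sqrt{1/\underline c+R_1^2}=:R_2$, which places $f_{\alpha(\delta)}\in G[B(R_2)]$ and hence, by the link condition once more, in $B_\chi(CR_2)$. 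With all three hypotheses verified, Corollary~\ref{cor11} applies with radii $CR_1$, $CR_2$ and $\bar\xi(\delta)=\delta$, and (\ref{eq:specialxi}) yields (\ref{pro2rate}).

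The one genuinely delicate point is the interplay of the two sides of the bracketing (\ref{cheng}): the \emph{upper} bound $\alpha(\delta)\le\overline c\,\delta^2$ is precisely what keeps the discrepancy of order $\delta$, whereas the \emph{lower} bound $\alpha(\delta)\ge\underline c\,\delta^2$ is precisely what keeps the penalty $\|G^{-1}f_{\alpha(\delta)}\|$ — and therefore the $X_\chi$-norm of $f_{\alpha(\delta)}$ — uniformly bounded; dropping either side would break one of the two ball hypotheses of Corollary~\ref{cor11}. The only structural fact to confirm beforehand is that $f_\alpha$ indeed lies in $\range(G)$, so that the penalty $\|G^{-1}f_\alpha\|$ is finite and the variational inequality is legitimate; this is guaranteed by the construction of (\ref{eq:Gregmeth}) as the minimiser of $T_\alpha$ over $\range(G)$.
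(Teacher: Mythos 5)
Your proposal is correct and follows essentially the same route as the paper: reduction to Corollary~\ref{cor11}, passage from $f\in G[B(R_1)]$ to $f\in B_\chi(CR_1)$ via the equivalence of (\ref{linkGchi}) and (\ref{linkGpsi}) and \cite[Lemma~6.2]{HofMAt07}, and the variational comparison $T_{\alpha}(f_{\alpha})\le T_{\alpha}(f)$ combined with the two-sided choice (\ref{cheng}) to obtain both the residual bound with $\bar\xi(\delta)=\delta$ and the uniform bound $\|G^{-1}f_{\alpha(\delta)}\|\le R_2$. Your added remark on which side of (\ref{cheng}) drives which estimate is accurate but does not change the argument.
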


  Due to \cite[Corollary 4.5]{HofMS08} for all concave $\Psi$ fulfilling the
  standing assumptions of this section the rate (\ref{pro2rate}) is even order
  optimal in the sense of
   $$\mathcal{O}\left(\delta\,\sqrt{\Psi(\bar K/\delta^2)}\right)=
     \mathcal{O}\left(\omega(G[B(R_1)],\delta)\right) \quad \mbox{as}
     \quad\delta \to +0\,.$$
 As already discussed in the introduction the requirement (\ref{linkGchi}) gets
 stronger for higher rates in (\ref{pro2rate}).
  In many applications (see as an illustration the examples in \cite{HofYam05})
  one can only verify range inclusions of the form (\ref{rhorange}) with some
  rate function $\bar \varrho$.
  Under operator monotonicity of the function $[\bar \rho^{-1}(\sqrt{t})]^2$
  (\ref{rhorange}) implies (\ref{linkGchi}) with $\bar \psi(t)=\bar
  \rho^{-1}(\sqrt{t})$ and 
  $\chi(\lambda)=\left[\bar \rho^{-1}\left(1/\sqrt{\lambda}\right)\right]^{-2}$.

\smallskip

 In order to verify in general for what index functions $\chi$ a range inclusion
 (\ref{linkGchi}) with $\range(G) \subseteq X_\chi$ is fulfilled, one can use
 the \emph{spectral theorem} for unbounded self-adjoint operators $T$ (see
 \cite[Chapter VII.3]{Wer00} and also \cite[Chapter VIII]{ReedSimon80}).
 In the Hilbert space~$X$, the injective, densely defined, self-adjoint,
 positive definite, and unbounded linear operator $T$ is unitarily invariant to
 a multiplication operator $\mathcal{M}$ expressed by a real multiplier function
 $m$. This means that there are a measure space~$(\Sigma,\mathcal A,\mu)$ with
 finite measure $\mu$, a unitary operator~$\mathcal{U}\colon X \to
 L^2(\Sigma,\mathcal A,\mu)$ and a real measurable function $m(t),\;t \in
 \Sigma$, such that $[\mathcal{M}h](t):=m(t)h(t)$ a.e., where $\mathcal{M}$ maps
 in $L^2(\Sigma,\mathcal A,\mu)$, and
$$ \mathcal{U}\, T\, \mathcal{U}^*\,h = \mathcal{M}\,h = m \cdot h$$
for all $h$ from the domain of $\mathcal{M}$. We note that the closure of the
range $\range(m)$ of the multiplier function $m$ and the spectrum $\spec(T)
\subseteq [\|T\|^{-1},\infty)$ of the operator $T$, possessing $+\infty$ as an
accumulation point, coincide. Moreover, we have for index functions $\psi \in
\mathcal{I}$ and $h$ from the domain of $\psi(\mathcal{M})$ $$ \mathcal{U}\,
\psi(T)\, \mathcal{U}^*\,h = \psi(M)\,h=\psi(m) \cdot h\,. $$ Then by using the
notations $\hat f:=\mathcal{U}f \in L^2(\Sigma,\mathcal A,\mu)$ and
$\widehat{(Gw)} := \mathcal{U}\,G\,w \in L^2(\Sigma,\mathcal A,\mu)$ by
definition we immediately find that $\range(G) \subseteq X_\chi$ is equivalent
to the condition that
\begin{equation} \label{incco}
 (Gw,\chi(T)Gw)=
 (\widehat{(Gw)},\chi(\mathcal{M})\widehat{(Gw)})_{\scriptscriptstyle
 L^2(\Sigma,\mathcal A,\mu)}=\int \limits_\Sigma \chi(m(t))|\widehat{(Gw)}(t)|^2
 dt <\infty
\end{equation}
holds for all $w \in X.$ In Example~\ref{eximaging} with background in imaging
(cf.~\cite{Scherzetal09}) we will consider the special case that $\mathcal{U}$
denotes the two-dimensional Fourier transform and that the corresponding measure
space is $(\R^2, \mathcal{B}(\R^2),\mu)$ with the associated Borel
$\sigma$-algebra and measure. In that example, $T$ and $G$ are commuting
operators, both non-compact with a non-closed range.

On the other hand, in Example~\ref{HSex} we will exploit the one-dimensional Fourier transform to formulate sufficient conditions such that classical source conditions are
satisfied for linear compact integral operators.

  \section{Examples}

  In the remaining examples we illustrate the theory. All the
  occurring operators $A$ are linear integral operators. First the
  Example~\ref{eximaging} refers to convolution operators $A$ which
  occur, for example, when the deblurring of noisy images is under
  consideration. Then the Example~\ref{HSex} illustrates the \emph{low
  rate} case where an integral equation with a smooth kernel is solved
  and it is known that the solution is in a Sobolev space. The
  situation here is similar as in the case of elliptic partial
  differential equations and has been discussed in~\cite{Boetal06}. In
  contrast to the PDE situation here convergence rates are low,
  typically of the form $O(|\log(\delta)|^{-k})$. The final
  Example~\ref{exfinal} illustrates the \emph{high rate} case where a
  derivative of data in the range of an integral operator with smooth
  kernel is considered. The high convergence rates are here of the
  form $O(\delta |\log(\delta)|^k)$.

  In the examples we consider functions over $\R^d\;(d=1,2)$ and
  Sobolev spaces $H^l(\R^d)\;(l=1,2,...)$ of Hilbert type will be used
  with norms $\|\cdot\|_l$ defined by
  $$
    \|x\|_l^2 = \frac{1}{(2\pi)^{d}} \int_{\R^d}
    (1+|\omega|^2+\cdots+|\omega|^{2l}) |\hat{x}|^2\,d\omega,
  $$
  where $\hat{x}=\hat{x}(\omega),\;\omega \in \R^d$, is the Fourier
  transform of $x$. Now let $E_l:H^l(\R^d)\rightarrow L^2(\R^d)$
  denote the embedding and $E_l^*$ the adjoint of $E_l$. Then $E_l
  E_l^* : L^2(\R^d) \rightarrow L^2(\R^d)$ is an integral operator and
  $$
    \widehat{E_l E_l^* y}(\omega) =
    \frac{\hat{y}(\omega)}{1+|\omega|^2+\cdots+ |\omega|^{2l}}.
  $$

  \begin{example} \label{eximaging}
  {\rm In this example with $X=Y=L^2(\R^2)$ we are interested in deblurring,
  that means in finding a true picture which is characterised by a function $f
  =f(t)\in L^2(\R^2),\;t=(t_1,t_2)^{\scriptscriptstyle T},$ that satisfies a
  linear operator equation (\ref{opeq}) of convolution type
  \begin{equation} \label{convolution}
    Af\,(s) = \int \limits_{\R^2} k(s-t) f(t)\, dt = g(s), \quad
  s=(s_1,s_2)^{\scriptscriptstyle T}\in \R^2,
  \end{equation}
  where $g \in L^2(\R^2)$ is a blurred image of $f$ which is additionally
  contaminated with noise such that only the noisy blurred image $g^\delta \in
  L^2(\R^2)$ satisfying (\ref{noise}) available as data. Following
  \cite[Chapter~3]{BeBo98} the kernel function
  $k(\tau),\;\tau=(\tau_1,\tau_2)^{\scriptscriptstyle T} \in \R^2,$ is called
  \emph{point spread function} of a space invariant imaging system under
  consideration. We assume that the kernel is such that its Fourier transform
  $\hat k=\hat k(\omega)\;,\omega=(\omega_1,\omega_2)^{\scriptscriptstyle T}$,
  called \emph{transfer function} is bounded. Different variants of such
  deblurring problems are presented and analysed in \cite{BeBo98}. As a
  reference situation we exploit for illustration a variant of an out-of-focus
  blur for which
  $$
    \hat k(\omega) = 2 \frac{J_1(D|\omega|)}{D|\omega|}
  $$
  where $J_1$ is the Bessel function of order one and $D$ is the radius of the
  circle of confusion (cf.~\cite[formula (3.25) on p.60]{BeBo98}). The linear
  convolution operator $A:L^2(\R^2) \to L^2(\R^2)$ in this example has a
  non-closed range but it is non-compact and the kernel is not square
  integrable. 

  In order to apply our theory to this example one needs to find an index
  function $\theta$ and a symmetric positive definite operator $T$ such that
  $A^*A = \theta(T)$. A natural choice in this context is $T= -\Delta$ and in
  this case $\theta$ needs to satisfy $|\hat k(\omega)|^2 = \theta(|\omega|^2)$.
  This, however, is not possible, as $\hat k(\omega)$ is zero for some finite
  $\omega$ but an index function has to satisfy $\theta(\lambda) > 0$ for all
  $\lambda > 0$ and it can only be zero asymptotically at zero or infinity. It
  is thus not possible to get error bounds for the deblurring problem using the
  variable Hilbert scale theory and $T=-\Delta$.

  One does not have this problem if one chooses $T=(A^*A)^{-1}$. Let us define
  the solution smoothness as $f \in H^l(\R^2)$. Then we have the operator
  $G=E_l^*E_l$ in (\ref{Gsmooth}) characterising the associated non-standard
  source condition. To find index functions $\chi$ that satisfy the link
  condition (\ref{linkGchi}) we can make use of formula (\ref{incco}) taking
  into account that $m(\omega)=1/|\hat k(\omega)|^2$ and
  $$
    |\widehat{Gw}(\omega)|^2=(1+|\omega|^2+\cdots+|\omega|^{2l})^{-1}| \hat w
    (\omega)|^2.
  $$
  Then the range inclusion $\range(G) \subseteq X_\chi$ takes the form 
  \begin{equation} \label{ex2cond}
   \chi\left(\frac{1}{|\hat k(\omega)|^2}\right)
   \frac{1}{(1+|\omega|^2+\cdots+|\omega|^{2l})} \le \bar C < \infty \quad
   \mbox{for all} \quad \omega \in \R^2\,.
  \end{equation}
  This range condition can only be satisfied if $\chi$ is bounded, i.e., i.e.
  $\chi(\lambda) \leq C < \infty$ for all $\lambda > 0$. This is again a
  consequence of the existence of zeros of $\hat k(\omega)$ for finite $\omega$.
  A consequence of the finiteness of $\chi$ is $L^2(\R^2) \subset H_\chi $ and
  it follows that the ``source condition" reduces to $f\in L^2(\R)$ which does
  not lead to an error bound.

  The failure of the above attempts to get error bounds clearly illustrates the
  need to extend the variable Hilbert scale theory to be able to cope with the
  deblurring problem. One can, however, deal with a partial deblurring problem.
  Observe that one has the asymptotics
  $$
    2 \left|\frac{J_1(D|\omega|)}{D|\omega|}\right| \asymp  |\omega|^{-3/2}
  $$
  for large $|\omega|$  (cf.~\cite[formula (3.29) on p.60]{BeBo98}). It follows
  that $\hat k (\omega) = |\omega|^{-3/2} \kappa(\omega)$ for some bounded
  $\kappa(\omega)$. The first factor $|\omega|^{3/2}$ relates to a ``smoothing
  component" of the out-of-focus blur situation. We now consider inversion of
  this smoothing component only. For this we introduce an integral operator $A$
  with kernel $k$ which satisfies
  \begin{equation} \label{outoffocus}
      \hat k(\omega) = |\omega|^{3/2}.
  \end{equation}
  For the ``partial" out-of-focus blur situation (\ref{outoffocus}) and monomials
  $\chi(\lambda)=\lambda^\kappa,\;\kappa>0$, we have (\ref{ex2cond}) if and only
  if $\kappa \le \frac{2l}{3}.$ With the relation
  $\chi(\lambda)=\Psi^{-1}(\lambda)/\lambda$ this corresponds with
  $\Psi(\lambda)\le \lambda^{3/(2l+3)}$. Hence based on Proposition~\ref{pro3}
  for the situation (\ref{outoffocus}) and under $f \in H^l(\R^2)$ a best
  possible convergence rate
  $$ \|f-f_\alpha\|_{\scriptscriptstyle
     L^2(\R^2)}=\mathcal{O}\left(\delta^{\frac{2l}{2l+3}}\right) \quad \mbox{as}
     \quad \delta \to +0$$
  can be obtained by Tikhonov regularisation with $H^l$-penalty term.
  }\end{example}

  \begin{example} \label{HSex}
  {\rm In this example we consider compact forward operators $A$ in equation
  (\ref{opeq}) with $X=Y=L^2(\R)$ in form of linear operators $A: L^2(\R) \to
  L^2(\R)$, for which the range of the operator $K:=\bar \psi(A^*A)$ is a subset
  of $H_\phi$ with some index function $\phi$ and some rate function $\bar
  \psi$. That means, we have $\range(K) \subseteq H_\phi$ and a classical source
  conditions (\ref{scclassic}) is valid for $f \in H_\phi$ implying the
  corresponding convergence rates in regularisation. In this context, let $K$ be
  a linear Fredholm integral operator of Hilbert-Schmidt type. For such
  operators one can provide conditions on the kernel which guarantee this range
  condition.
  \begin{lemma}
    \label{lem3-ex}
    Let $K:L^2(\R)\rightarrow L^2(\R)$ be a Hilbert-Schmidt operator with kernel
    $k(t,s)\in L^2(\R^2)$. Furthermore, let $\tilde{K}:L^2(\R)\rightarrow
    L^2(\R)$ be an integral operator with kernel $\tilde{k}(\omega,s)=\int_R
    e^{-i\omega t} k(t,s)\, dt$. Then $\tilde{K}$ is a Hilbert-Schmidt operator
    and
    $$
       \tilde{K} x = \widehat{Kx}, \quad x\in L^2(\R).
    $$
  \end{lemma}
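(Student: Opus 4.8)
The plan is to establish the two assertions---that $\tilde K$ is Hilbert-Schmidt and that $\tilde K x=\widehat{Kx}$---separately, reducing both to a slice-wise application of Plancherel's identity together with Fubini/Tonelli. Throughout I use the normalisation $\hat x(\omega)=\int_\R e^{-i\omega t}x(t)\,dt$ implicit in the norm defined for the examples, so that Plancherel reads $\int_\R|\hat x(\omega)|^2\,d\omega=2\pi\int_\R|x(t)|^2\,dt$.

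First I would treat the Hilbert-Schmidt property. For each fixed $s$ the function $\tilde k(\cdot,s)$ is precisely the one-dimensional Fourier transform of $k(\cdot,s)$. Since $k\in L^2(\R^2)$, Tonelli's theorem gives $k(\cdot,s)\in L^2(\R)$ for almost every $s$, so Plancherel applies on each such slice and yields $\int_\R|\tilde k(\omega,s)|^2\,d\omega=2\pi\int_\R|k(t,s)|^2\,dt$. Integrating over $s$ and invoking Tonelli once more gives $\int_{\R^2}|\tilde k(\omega,s)|^2\,d\omega\,ds=2\pi\|k\|_{L^2(\R^2)}^2<\infty$. Hence $\tilde k\in L^2(\R^2)$ and $\tilde K$ is Hilbert-Schmidt with $\|\tilde K\|_{\mathrm{HS}}=\sqrt{2\pi}\,\|K\|_{\mathrm{HS}}$. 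In operator language $\tilde k=\mathcal F_1 k$, where $\mathcal F_1$ denotes the Fourier transform acting on the first variable, a bounded map on $L^2(\R^2)$.

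Next I would prove the identity $\tilde K x=\widehat{Kx}$. Both $x\mapsto\tilde K x$ and $x\mapsto\widehat{Kx}$ are bounded operators on $L^2(\R)$ (the former because $\tilde K$ is Hilbert-Schmidt, the latter as a composition of $K$ with the bounded Fourier transform), so it suffices to verify the identity for kernels drawn from a dense subclass. I would approximate $k$ in $L^2(\R^2)$ by compactly supported continuous kernels $k_n$, with associated operators $K_n$ and $\tilde K_n$. For such $k_n$ the double integral $\iint|k_n(t,s)\,x(s)|\,dt\,ds$ is finite for every $x\in L^2(\R)$ (bound $\int|k_n(\cdot,s)|\,dt$ in $L^2_s$ and apply Cauchy--Schwarz), and $K_n x$ is bounded with compact support, hence in $L^1\cap L^2$; Fubini's theorem then legitimately exchanges the order of integration:
\begin{equation*}
  \widehat{K_n x}(\omega)=\int_\R e^{-i\omega t}\!\int_\R k_n(t,s)x(s)\,ds\,dt
  =\int_\R\tilde k_n(\omega,s)\,x(s)\,ds=\tilde K_n x(\omega).
\end{equation*}
Since $k_n\to k$ in $L^2(\R^2)$, one has $K_n\to K$ and, as $\mathcal F_1$ is bounded, $\tilde k_n\to\tilde k$, so $\tilde K_n\to\tilde K$ in Hilbert-Schmidt norm. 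Passing to the $L^2(\R)$ limit on both sides of the display gives $\tilde K x=\widehat{Kx}$ for all $x\in L^2(\R)$.

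The only delicate points are the interchanges of integration, and the main obstacle to watch is that the identity $\tilde K x=\widehat{Kx}$ cannot be obtained by applying Fubini directly to a general $k\in L^2(\R^2)$: for such $k$ the image $Kx$ need not lie in $L^1(\R)$, so $\widehat{Kx}$ is only defined as an $L^2$-limit and the naive exchange of $\int e^{-i\omega t}\,dt$ with $\int\,ds$ is unjustified. Routing the computation through the compactly supported approximants $k_n$, where absolute integrability holds outright, and recovering the general statement by the continuity argument above is exactly what repairs this gap; the slice-wise Plancherel step, by contrast, is unconditional once Tonelli secures finiteness of almost every slice.
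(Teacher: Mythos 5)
Your argument is correct, and it takes a genuinely different route from the paper. The paper works through the adjoint: it first notes (by Fubini) that $K^*$ has kernel $\overline{k(t,s)}$, then uses Plancherel slice-wise in the $t$-variable to obtain $K^*u=\frac{1}{2\pi}\tilde K^*\hat u$, and finally chains Parseval identities to get $\frac{1}{2\pi}(\hat u,\widehat{Kv})=(u,Kv)=(K^*u,v)=\frac{1}{2\pi}(\tilde K^*\hat u,v)=\frac{1}{2\pi}(\hat u,\tilde Kv)$ for all $u,v\in L^2(\R)$, which identifies $\widehat{Kv}$ with $\tilde Kv$ weakly and hence in $L^2$. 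That route needs no approximation argument, but it proves the identity only in this dual form and leaves the Hilbert--Schmidt property of $\tilde K$ entirely implicit. You instead prove the identity strongly on a dense class of compactly supported continuous kernels, where $K_nx\in L^1\cap L^2$ and Fubini applies outright, and then pass to the limit using the boundedness of $K\mapsto\tilde K$ (i.e.\ of $\mathcal F_1$ on $L^2(\R^2)$) and of the Fourier transform on $L^2(\R)$; this is more elementary and makes explicit both the measure-theoretic justifications and the Hilbert--Schmidt bound $\|\tilde K\|_{\mathrm{HS}}=\sqrt{2\pi}\,\|K\|_{\mathrm{HS}}$, which the paper does not address. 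You also correctly isolate the genuine obstruction --- that for general $k$ the image $Kx$ need not be in $L^1(\R)$, so a direct Fubini interchange is illegitimate --- which is precisely the point the paper's adjoint trick circumvents by never forming $\widehat{Kx}$ as a pointwise integral. Both proofs are sound; yours is longer but more self-contained.
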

  \begin{proof}
    The adjoint operator $K^*$ of $K$ is an integral operator with kernel
    $k^*(s,t)=\overline{k(t,s)}$ as a consequence of the theorem of Fubini.
    By Plancherel's theorem one has
    $$
      K^* u = \frac{1}{2\pi} \tilde{K}^* \hat{u}.
    $$
    An application of Parseval's identity several times gives for
    $u,v\in L^2(\R)$:
    \begin{align*}
      \frac{1}{2\pi} (\hat{u}, \widehat{Kv}) & = (u, Kv) \\
      & = (K^* u, v) \\
      & = \frac{1}{2\pi} (\tilde{K}^* \hat{u}, v) \\
      & = \frac{1}{2\pi}(\hat{u}, \tilde{K} v).
    \end{align*}
  \end{proof}

  \begin{proposition}
    Let $K:L^2(\R)\rightarrow L^2(\R)$ be a Hilbert-Schmidt operator where the
    Fourier transform $\tilde{k}(\omega,s)=\int_\R e^{-i\omega t} k(t,s)\, dt$
    of the kernel of $K$ satisfies
    $$
      \int_\R \int_\R \phi(\omega^2) |\tilde{k}(\omega,s)|^2\, ds\,
       d\omega < \infty
    $$
    for some index function $\phi$. Then $\range(K) \subseteq H_\phi$.
  \end{proposition}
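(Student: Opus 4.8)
The plan is to turn the range inclusion $\range(K)\subseteq H_\phi$ into a single integral estimate on the Fourier side. In this example the scale $\{H_\phi\}$ is the variable Hilbert scale generated by $T=-d^2/dt^2$, for which the Fourier transform diagonalises $T$ as multiplication by $\omega^2$; hence $\phi(T)$ acts as multiplication by $\phi(\omega^2)$ and, with the Parseval normalisation fixed in Lemma~\ref{lem3-ex}, membership $y\in H_\phi$ is equivalent to
$$
  \|y\|_\phi^2 \,=\, \frac{1}{2\pi}\int_\R \phi(\omega^2)\,|\hat y(\omega)|^2\,d\omega \,<\,\infty\,.
$$
It therefore suffices to show that $y:=Kx$ satisfies this finiteness condition for every $x\in L^2(\R)$.

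First I would apply Lemma~\ref{lem3-ex} to replace $\widehat{Kx}$ by $\tilde K x$, giving the explicit representation $\widehat{Kx}(\omega)=\int_\R \tilde k(\omega,s)\,x(s)\,ds$ for almost every $\omega$. For each fixed $\omega$ the Cauchy--Schwarz inequality then yields the pointwise bound $|\widehat{Kx}(\omega)|^2\le \|x\|^2\int_\R |\tilde k(\omega,s)|^2\,ds$. Multiplying by $\phi(\omega^2)\ge 0$ and integrating in $\omega$, the order of integration may be exchanged by Tonelli's theorem because the integrand is nonnegative, so that
$$
  \int_\R \phi(\omega^2)\,|\widehat{Kx}(\omega)|^2\,d\omega \,\le\, \|x\|^2 \int_\R\int_\R \phi(\omega^2)\,|\tilde k(\omega,s)|^2\,ds\,d\omega\,.
$$
By hypothesis the right-hand side is finite, whence $Kx\in H_\phi$ with $\|Kx\|_\phi\le C\,\|x\|$ for a constant $C$ independent of $x$; in particular $\range(K)\subseteq H_\phi$.

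The individual steps are routine once the $H_\phi$-norm has been identified, and I expect no serious obstacle. The only points demanding care are the verification that $T=-d^2/dt^2$ is the generator of the scale in this example --- so that the hypothesis, phrased in terms of $\phi(\omega^2)$, is exactly matched to the spectral representation of $\phi(T)$ --- together with the bookkeeping of the Parseval factor $1/(2\pi)$ and the harmless behaviour at $\omega=0$, which has Lebesgue measure zero. Because the assumed integrability of $\phi(\omega^2)^{1/2}\tilde k(\omega,s)$ is already global, the estimate produces the stronger conclusion that $K$ maps $L^2(\R)$ \emph{boundedly} into $H_\phi$, and no delicate limiting argument is required.
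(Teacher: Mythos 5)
Your argument is correct and coincides with the paper's own proof: both invoke Lemma~\ref{lem3-ex} to write $\widehat{Kx}(\omega)=\int_\R\tilde k(\omega,s)x(s)\,ds$, apply Cauchy--Schwarz pointwise in $\omega$, and integrate against $\phi(\omega^2)$ to bound $\|Kx\|_\phi^2$ by the assumed double integral times $\|x\|^2$. Your added remark that this in fact shows $K$ maps $L^2(\R)$ boundedly into $H_\phi$ is a harmless (and correct) strengthening implicit in the same estimate.
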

  \begin{proof}
    By Lemma~\ref{lem3-ex} one has
    $$
      \hat{y}(\omega) = \int_\R \tilde{k}(\omega,s) x(s)\, ds
    $$
    which we insert into the following bound, obtained from the Cauchy-Schwarz
    inequality:
    $$
      \left|\int_\R \tilde{k}(\omega,s) x(s)\, ds \right|^2 \leq
      \int_\R |\tilde{k}(\omega,s)|^2\, ds \, \|x\|^2.
    $$
    It follows that for $y=Kx$ with $x\in L^2(\R)$ one has
    \begin{align*}
      \|y\|_\phi^2 &= \frac{1}{2\pi} \int_\R \phi(\omega^2) |\hat{y}(\omega)|^2\,
      d\omega \\
      &\leq \frac{1}{2\pi}\int_\R \int_\R \phi(\omega^2) |\tilde{k}(\omega,s)|^2
      \, ds \, d\omega \, \|x\|^2
    \end{align*}
    and consequently $y\in H_\phi$.
  \end{proof}

}
\end{example}

\begin{example} \label{exfinal}
{\rm
  As a concrete application example we consider a problem from
  derivative spectroscopy~\cite{StauS68}. Here numerical derivatives
  are used to enhance the resolution of measured spectra in order to
  separate close peaks. An instance is the Eddington correction
  formula. The approach determines
  $$
    f = Lg := g - \frac{g^{(2)}}{2}
  $$
  from observed $g_\delta$ where $g^{(2)}$ is the second derivative of
  $g$. We now apply the theory developed so far to determine how well
  $f=Lg$ can be determined from spectral data $g_\delta$.

  For $f\in H^2(\R)$ and $f=Lg$ the Fourier transforms $\hat{f}$ and
  $\hat{g}$ satisfy
  $$
    \hat{f}(\omega) = (1+\omega^2/2)\, \hat{g}(\omega), \quad
    \text{a.e.}
  $$
  Using Plancherel's theorem, one obtains from this the bounds
  $$
    \frac{1}{2} \|f\|_2 \leq \|Lf\| \leq \|f\|_2, \quad
    f\in H^2(\R)
  $$
  which means in particular that $\|Lf\|$ is an equivalent norm for
  $H^2(\R)$. Using standard arguments, one can then show that $L:
  H^2(\R)\rightarrow L^2(\R)$ is a Hilbert space isomorphism. Using
  the convolution theorem one sees that $A=E_2 L^{-1} : L^2(\R)
  \rightarrow L^2(\R)$ is an integral operator with
  $$
     Af\, (t) = \frac{1}{\sqrt{2}} \int_\R \exp(-\sqrt{2}|t-s|) f(s)\,
    ds \quad t \in \R
  $$
  where $E_2$ denotes the embedding $H^2(\R)\rightarrow L^2(\R)$. As
  $L^{-1}$ maps $L^2(\R)$ onto $H^2(\R)$ the range of $A$ can be
  identified with $H^2(\R)$.

  In addition to the Sobolev spaces, which form a classical Hilbert
  scale, we will use a \emph{variable Hilbert scale} $H_\phi$ with
  norms $\|\cdot\|_\phi$ defined by
  $$
    \|x\|_\phi^2 = \frac{1}{(2\pi)} \int_{-\infty}^\infty
    \phi(\omega^2) \, |\hat{x}(\omega)|^2 \, d\omega
  $$
  where $\phi$ are \emph{index functions}. Note that we have here
  $H_\phi=X_\phi$ for $X=L^2(\R)$ (see~\cite{Heg92}) and the
  generating operator is the second order differential operator
  $T=-d^2/dt^2$. The index functions
  $$
    \nu_k(\lambda) = 1 + \lambda + \cdots + \lambda^k
    = \frac{\lambda^k - 1}{\lambda - 1}
  $$
  define the Sobolev spaces, in particular, one has
  $H_{\nu_k}=H^{k}(\R)$ and furthermore, the Sobolev norm is equal to
  the norm of the corresponding variable Hilbert scale:
  $$
    \|f\|_k = \|f\|_{\nu_k}, \quad f\in H_{\nu_k}.
  $$

  In this framework, we now get error bounds analogue to the ones in
  Corollary~\ref{cor1} which are again a consequence of
  Lemma~\ref{lem1}.
  \begin{corollary}\label{cor1a}
    Let $H_\nu$ be the Hilbert scales generated by $T=-d^2/dt^2$ from
    $L^2(\R)$. Furthermore, let $A: L^2(\R)\rightarrow L^2(\R)$ be a
    (convolution) operator satisfying
    $$
       A^* A = \theta(T)
    $$
    for some bounded index function $\theta$. Moreover, let $\phi,\psi$
    and $\Psi$ be index functions and $\Psi$ be concave such that
    $$
       \phi(\lambda) \le \Psi(\psi(\lambda)), \quad \phi(\lambda) \theta(\lambda)=1,\quad \mbox{for} \quad t > 0.
    $$
    If $Af=g\in H_\psi$ and if $f_\alpha$ is such that $Af_\alpha\in H_\psi$
    and
    \begin{align*}
      \|Af_\alpha - g \|_\psi & = \zeta \\
      \|Af_\alpha - g \| & = \epsilon
    \end{align*}
    then
    $$
      \|f-f_\alpha\| \leq \epsilon \sqrt{\Psi(\zeta^2/\epsilon^2)}.
    $$
  \end{corollary}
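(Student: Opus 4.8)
The plan is to reduce the statement to a single application of the interpolation inequality of Lemma~\ref{lem1}, but now carried out in the \emph{image space} $Y=L^2(\R)$ rather than in $X$. The element to which the inequality is applied is the residual $h:=Af_\alpha-g$. Since $g=Af$, one has $h=A(f_\alpha-f)$, so that $\epsilon=\|h\|$ and $\zeta=\|h\|_\psi$ are exactly the two data quantities appearing in the hypotheses, now read off in the Hilbert scale $\{H_\nu\}$ generated by $T=-d^2/dt^2$. The base index function in Lemma~\ref{lem1} will be taken to be the constant function $\theta_{\mathrm{lem}}\equiv 1$ (not to be confused with the multiplier $\theta$ of the corollary), so that $H_{\theta_{\mathrm{lem}}}=L^2(\R)$ and the roles of $\phi,\psi,\Psi$ coincide with those in the corollary; in particular the hypothesis $\phi(\lambda)\le\Psi(\psi(\lambda))$ is precisely condition~(\ref{condition1}) for this choice of base.

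First I would establish the bridging identity $\|f-f_\alpha\|=\|h\|_\phi$. Because $A$ is a convolution operator, $A$, $A^*$ and $T$ are simultaneously diagonalised by the Fourier transform and hence commute; consequently $\phi(T)$ commutes with $A$ and $A^*$. Using $A^*A=\theta(T)$ and the relation $\phi(\lambda)\theta(\lambda)=1$ one computes
\begin{equation*}
  \|h\|_\phi^2=(A(f_\alpha-f),\,\phi(T)\,A(f_\alpha-f))
   =(f_\alpha-f,\,\phi(T)\theta(T)(f_\alpha-f))=\|f_\alpha-f\|^2,
\end{equation*}
since $(\phi\theta)(T)=I$. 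Equivalently, in Fourier variables the multiplier $|\hat a(\omega)|^2=\theta(\omega^2)$ of $A^*A$ cancels against the weight $\phi(\omega^2)$, giving $\phi(\omega^2)|\hat a(\omega)|^2=1$. This is the crucial step that transports the unobservable $X$-norm $\|f-f_\alpha\|$ into a $\phi$-norm of the \emph{observable} residual $h$. With this identity, applying Lemma~\ref{lem1} to $h$ with base $\theta_{\mathrm{lem}}\equiv 1$ yields $\|h\|_\phi^2/\|h\|^2\le\Psi(\|h\|_\psi^2/\|h\|^2)$, i.e.\ $\|f-f_\alpha\|^2\le\epsilon^2\,\Psi(\zeta^2/\epsilon^2)$, which is the claimed bound after taking square roots.

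The step I expect to be the main obstacle is that Lemma~\ref{lem1} is stated for an operator with \emph{bounded inverse}, whose spectrum is bounded away from zero, whereas $T=-d^2/dt^2$ on $L^2(\R)$ has $\spec(T)=[0,\infty)$ and therefore no bounded inverse. The lemma does not apply verbatim, and I would instead rerun its Jensen argument directly: form the probability measure $d\nu(\lambda)=\|h\|^{-2}\,d(h,E(\lambda)h)$ associated with $T$ and $h$, note that the spectrum of $-d^2/dt^2$ is purely absolutely continuous so that $\nu$ assigns no mass to $\{0\}$, and observe that the hypothesis $\phi(\lambda)\le\Psi(\psi(\lambda))$ for all $\lambda>0$ therefore holds $\nu$-almost everywhere. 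The inverse Jensen inequality (using concavity of $\Psi$) together with monotonicity of the integral then gives $\int\phi\,d\nu\le\int\Psi(\psi)\,d\nu\le\Psi\!\left(\int\psi\,d\nu\right)$, which is exactly the desired inequality since $\int\phi\,d\nu=\|f-f_\alpha\|^2/\epsilon^2$ and $\int\psi\,d\nu=\zeta^2/\epsilon^2$. The remaining points are routine: well-definedness of $\phi(T)$ and $\psi(T)$ via the spectral theorem, finiteness of $\|h\|_\psi=\zeta$ guaranteeing $h\in H_\psi$, and the commutation of $A$ with functions of $T$, all of which follow from the convolution (Fourier-multiplier) structure.
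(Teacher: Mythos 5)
Your proof is correct and, at bottom, it is the paper's proof in different clothes: both rest on a single application of the Jensen-type interpolation inequality of Lemma~\ref{lem1}, and the probability measure driving the Jensen step is literally the same in the two versions. The paper applies Lemma~\ref{lem1} to the error $f-f_\alpha$ with base index function $\theta$, so that the three norms involved become $\|f-f_\alpha\|_{\phi\theta}=\|f-f_\alpha\|$, $\|f-f_\alpha\|_{\theta}=\|Af_\alpha-g\|$ and $\|f-f_\alpha\|_{\psi\theta}=\|Af_\alpha-g\|_\psi$; you instead apply it to the residual $h=A(f_\alpha-f)$ with base $\equiv 1$ and transport the $X$-norm through the bridging identity $\|h\|_\phi=\|f-f_\alpha\|$. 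Since $d(h,E(\lambda)h)=\theta(\lambda)\,d\bigl(f-f_\alpha,E(\lambda)(f-f_\alpha)\bigr)$ for a convolution operator, the two measures coincide and the Jensen inequality is identical, so the difference is purely one of bookkeeping. Where your write-up genuinely improves on the paper's is the point you flag yourself: Lemma~\ref{lem1} is stated for $T$ with bounded inverse, whereas $T=-d^2/dt^2$ on $L^2(\R)$ has spectrum $[0,\infty)$, so the lemma does not apply verbatim; the paper cites it anyway without comment, while you rerun the Jensen argument directly and observe that the spectral measure of $-d^2/dt^2$ assigns no mass to $\lambda=0$, so the hypothesis $\phi(\lambda)\le\Psi(\psi(\lambda))$ for all $\lambda>0$ holds $\nu$-almost everywhere and the argument goes through. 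That patch is exactly what is needed and closes a small gap that the paper's own proof leaves open.
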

  \begin{proof}
    Note that $\theta(T)$ is well defined by the Fourier transform and
    as $\theta$ is bounded, so is $A$. Furthermore it follows from
    the condition $A^*A = \theta(T)$ that
    $\|h\|_\theta = \|Ah\|$ for all $h\in L^2(\R)$ and
    $\|h\|_{\psi\theta} = \|Ah\|_\psi$, for all $h$ with $Ah\in H_\psi$.

    By the variable Hilbert scale interpolation inequality
    (Lemma~\ref{lem1}) one has
    $$
      \|f-f_\alpha\|^2_{\phi\theta} \leq \|f-f_\alpha\|^2_\theta\,
      \Psi\left(\frac{\|f-f_\alpha\|^2_{\psi\theta}}{\|f-f_\alpha\|^2_\theta}\right).
    $$
    Now $\phi(\lambda)\theta(\lambda)=1$ and $\|f_\alpha-f\|_\theta
    = \|Af_\alpha-g\|$
    as $f_\alpha-f\in L^2(\R)$. Furthermore, $\|f_\alpha-f\|_{\psi\theta}
    = \|Af_\alpha - g\|_\psi$.
    By Lemma~\ref{concave1} one then has
    $$
      \|f_\alpha-f\| \leq \|Af_\alpha - g\|
      \sqrt{\Psi(\|Af_\alpha-g\|_\psi^2/\|Af_\alpha-g\|^2)}.
    $$
    The bound follows by another application of Lemma~\ref{concave1}.
  \end{proof}
  In comparison with Corollary~\ref{cor1} this corollary uses an
  operator $T$ which is not necessarily equal to $(A^*A)^{-1}$ but more
  importantly, the source condition is here not given as a property of
  the solution $f$ but of the data $g$.

  For the application of this corollary to the case of the Eddington
  correction formula one chooses $\theta(\lambda)=1/(1+\lambda/2)$ and
  so $\phi(\lambda) = 1 + \lambda/2$.

  In contrast to the usual case, where the source condition is stated
  as a condition on $f$, here the source condition is stated as a
  condition on (the original spectrum) $g$. This source condition
  results from physical models for the spectrum, and, in particular
  for the so-called spectral broadening. A variety of models are used,
  the most common ones are the Gaussian, Lorenz and Voigt spectra
  where a Voigt spectrum is a combination of a Lorenz and a Gaussian
  spectrum. Here we consider Gaussian spectra defined by
  $$
    g(t) = \frac{1}{\sqrt{2\pi}} \int_\R \exp(-(t-s)^2/2) v(s)\, ds
  $$
  for some $v\in L^2(\R)$. For a different discussion and more
  background on the problem, the reader may consult the paper by
  Hegland~\cite{Heg09x}.

  It follows that $g\in H_\psi$ with $\psi(\lambda) = \exp(\lambda)$.
  The concave
  function $\Psi$ can then be chosen as
  $$
    \Psi(\lambda) =
    \begin{cases}
        \lambda,   & \text{for $\lambda \leq 1$} \\
        (1 + \log(\lambda)/2)^2, &  \text{for $\lambda \geq 1$}.
    \end{cases}
  $$
  It follows that $\Psi$ is concave and that $\phi(\lambda) \le
  \Psi(\psi(\lambda)) $. As a consequence one gets the error bounds
   $$
    \|f-f_\alpha\| \leq \delta (1 + \log(\eta/\delta))
   $$
  for $\delta < \eta$ and $\|f-f_\alpha\| \leq \eta$ if $\delta \geq \epsilon$.
  The stabilisation guarantees that even if the errors are very large, the
  error of the approximation does not grow to infinity. In fact, the solution
  $f_\alpha = 0$ would probably be a good choice for the large data error case.

}\end{example}


\begin{thebibliography}{10}

\bibitem{KenH09}
K.~Atkinson and W.~Han, \emph{Theoretical {N}umerical {A}nalysis -- {A}
  {F}unctional {A}nalysis {F}ramework}, 3rd ed., Texts in Applied Mathematics,
  vol.~39, Springer, Dordrecht, 2009.

\bibitem{BakuGon04}
A.~B. Bakushinsky and M.~Yu. Kokurin, \emph{Iterative {M}ethods for
  {A}pproximate {S}olution of {I}nverse {P}roblems}, Springer, Dordrecht, 2004.

\bibitem{Bau87}
J.~Baumeister, \emph{Stable {S}olution of {I}nverse {P}roblems}, Friedr. Vieweg
  \& Sohn, Braunschweig, 1987.

\bibitem{BeBo98}
M.~Bertero and P.~Boccacci, \emph{Introduction to {I}nverse {P}roblems in
  {I}maging}, Institute of Physics Publishing, Bristol, 1998.

\bibitem{Bhatia97}
R.~Bhatia, \emph{Matrix analysis}, Springer-Verlag, New York, 1997.

\bibitem{Boetal06}
A.~B{\"o}ttcher, B.~Hofmann, U.~Tautenhahn, and M.~Yamamoto, \emph{Convergence
  rates for {T}ikhonov regularization from different kinds of smoothness
  conditions}, Appl. Anal. \textbf{85} (2006), no.~5, 555--578.

\bibitem{ChengYam00}
J.~Cheng and M.~Yamamoto, \emph{One new strategy for a priori choice of
  regularizing parameters in {T}ikhonov's regularization}, Inverse Problems
  \textbf{16} (2000), no.~4.

\bibitem{DHY07}
D.~D{\"u}velmeyer, B.~Hofmann, and M.~Yamamoto, \emph{Range inclusions and
  approximate source conditions with general benchmark functions}, Numer.
  Funct. Anal. Optim. \textbf{28} (2007), no.~11-12, 1245--1261.

\bibitem{EHN96}
H.~W. Engl, M.~Hanke, and A.~Neubauer, \emph{Regularization of {I}nverse
  {P}roblems}, Kluwer Academic Publishers, Dordrecht, 1996.

\bibitem{Groe84}
C.~W. Groetsch, \emph{The {T}heory of {T}ikhonov {R}egularization for
  {F}redholm {E}quations of the {F}irst {K}ind}, Pitman, Boston, MA, 1984.

\bibitem{Heg92}
M.~Hegland, \emph{{An optimal order regularization method which does not use
  additional smoothness assumptions}}, {SIAM J. Numer. Anal.} \textbf{{29}}
  ({1992}), no.~{5}, 1446--1461.

\bibitem{Heg95}
\bysame, \emph{Variable {H}ilbert scales and their interpolation inequalities
  with applications to {T}ikhonov regularization}, Appl. Anal. \textbf{59}
  (1995), no.~1-4, 207--223.

\bibitem{Heg09x}
\bysame, \emph{Error bounds for spectral enhancement which are based on
  variable {H}ilbert scale inequalities}, 2009,
  http://www.citebase.org/abstract?id=oai:arXiv.org:0911.2695.

\bibitem{Hof06}
B.~Hofmann, \emph{Approximate source conditions in {T}ikhonov-{P}hillips
  regularization and consequences for inverse problems with multiplication
  operators}, Mathematical Methods in the Applied Sciences \textbf{29} (2006),
  351--371.

\bibitem{HofMAt07}
B.~Hofmann and P.~Math{\'e}, \emph{Analysis of profile functions for general
  linear regularization methods}, SIAM J. Numer. Anal. \textbf{45} (2007),
  no.~3, 1122--1141.

\bibitem{HofMS08}
B.~Hofmann, P.~Math{\'e}, and M.~Schieck, \emph{Modulus of continuity for
  conditionally stable ill-posed problems in {H}ilbert space}, J. Inverse
  Ill-Posed Probl. \textbf{16} (2008), no.~6, 567--585.

\bibitem{HofMW09}
B.~Hofmann, P.~Math{\'{e}}, and H.~von Weizs{\"{a}}cker, \emph{{Regularization
  in Hilbert space under unbounded operators and general source conditions}},
  {Inverse Problems} \textbf{{25}} ({2009}), no.~{11}, 115013.

\bibitem{HofYam05}
B.~Hofmann and M.~Yamamoto, \emph{Convergence rates for {T}ikhonov
  regularization based on range inclusions}, Inverse Problems \textbf{21}
  (2005), no.~3, 805--820.

\bibitem{Hohage1}
T.~Hohage, \emph{Logarithmic convergence rates of the iteratively regularized
  {G}auss-{N}ewton method for an inverse potential and an inverse scattering
  problem}, Inverse Problems \textbf{13} (1997), no.~5, 1279--1299.

\bibitem{Hohage2}
\bysame, \emph{Regularization of exponentially ill-posed problems},
  Numer.~Funct.~Anal.~Optim. \textbf{21} (2000), no.~3-4, 439--464.

\bibitem{IvaK69}
V.~K. Ivanov and T.~I. Koroljuk, \emph{The estimation of errors in the solution
  of linear ill-posed problems.}, \v Z. Vy\v cisl. Mat. i Mat. Fiz. \textbf{9}
  (1969), 30--41.

\bibitem{Kirsch96}
A.~Kirsch, \emph{An {I}ntroduction to the {M}athematical {T}heory of {I}nverse
  {P}roblems}, Springer-Verlag, New York, 1996.

\bibitem{Lou89}
A.K. Louis, \emph{Inverse und schlecht gestellte {P}robleme}, B. G. Teubner,
  Stuttgart, 1989.

\bibitem{MatPer03b}
P.~Math{\'e} and S.V. Pereverzev, \emph{Discretization strategy for linear
  ill-posed problems in variable {H}ilbert scales}, Inverse Problems
  \textbf{19} (2003), no.~6, 1263--1277.

\bibitem{MatPer03a}
\bysame, \emph{Geometry of linear ill-posed problems in variable {H}ilbert
  scales}, Inverse Problems \textbf{19} (2003), no.~3, 789--803.

\bibitem{MatTau06}
P.~Math{\'e} and U.~Tautenhahn, \emph{Interpolation in variable {H}ilbert
  scales with application to inverse problems}, Inverse Problems \textbf{22}
  (2006), no.~6, 2271--2297.

\bibitem{Nat84}
F.~Natterer, \emph{Error bounds for {T}ikhonov regularization in {H}ilbert
  scales}, Appl. Anal. \textbf{18} (1984), no.~1-2, 29--37.

\bibitem{ReedSimon80}
M.~Reed and B.~Simon, \emph{Methods of {M}odern {M}athematical {P}hysics {I}:
  {F}unctional {A}nalysis}, second ed., Academic Press Inc. [Harcourt Brace
  Jovanovich Publishers], New York, 1980.

\bibitem{Rieder03}
A.~Rieder, \emph{Keine {P}robleme mit inversen {P}roblemen}, Friedr. Vieweg \&
  Sohn, Braunschweig, 2003.

\bibitem{Scherzetal09}
O.~Scherzer, M.~Grasmair, H.~Grossauer, M.~Haltmeier, and F.~Lenzen,
  \emph{Variational {M}ethods in {I}maging}, Applied Mathematical Sciences,
  vol. 167, Springer, New York, 2009.

\bibitem{StauS68}
F.R. Stauffer and H.~Sakai, \emph{{Derivative spectroscopy}}, {Applied Optics}
  \textbf{{7}} ({1968}), no.~{1}, 61.

\bibitem{Tanabe79}
H.~Tanabe, \emph{Equations of evolution}, Monographs and Studies in
  Mathematics, vol.~6, Pitman, Boston, Mass., 1979, Translated from the
  Japanese.

\bibitem{Taut96}
U.~Tautenhahn, \emph{Error estimates for regularization methods in {H}ilbert
  scales}, SIAM J. Numer. Anal. \textbf{33} (1996), no.~6, 2120--2130.

\bibitem{Wer00}
D.~Werner, \emph{Funktionalanalysis}, extended ed., Springer-Verlag, Berlin,
  2000.

\end{thebibliography}

\def\cprime{$'$}
\providecommand{\bysame}{\leavevmode\hbox to3em{\hrulefill}\thinspace}
\providecommand{\MR}{\relax\ifhmode\unskip\space\fi MR }
\providecommand{\MRhref}[2]{%
  \href{http://www.ams.org/mathscinet-getitem?mr=#1}{#2}
}
\providecommand{\href}[2]{#2}

\end{document}